\documentclass[11pt]{article}

\usepackage[T1]{fontenc}
\usepackage[utf8]{inputenc}
\usepackage{lmodern}
\usepackage{microtype}
\usepackage{amsmath,amssymb,amsthm,mathtools}
\usepackage{bm}
\usepackage{enumitem}
\usepackage{geometry}
\usepackage{hyperref}
\usepackage{mathrsfs}

\hypersetup{colorlinks=true, linkcolor=blue, citecolor=blue, urlcolor=blue}

\numberwithin{equation}{section}

\newtheorem{theorem}{Theorem}[section]
\newtheorem{proposition}[theorem]{Proposition}
\newtheorem{lemma}[theorem]{Lemma}
\newtheorem{corollary}[theorem]{Corollary}
\theoremstyle{definition}
\newtheorem{definition}[theorem]{Definition}
\newtheorem{remark}[theorem]{Remark}

\newcommand{\R}{\mathbb{R}}
\newcommand{\Z}{\mathbb{Z}}
\newcommand{\N}{\mathbb{N}}
\newcommand{\supp}{\operatorname{supp}}
\renewcommand{\Vec}{\operatorname{Vec}}
\newcommand{\CPwL}{\mathrm{CPwL}}
\newcommand{\ReLU}{\mathrm{ReLU}}
\newcommand{\Ups}{\Upsilon}
\newcommand{\one}{\mathbf{1}}

\title{Exact ReLU realization of tensor-product refinement iterates}
\author{Tsogtgerel Gantumur\\[0.5ex]
\small McGill University, Montr\'{e}al, QC, Canada\\
\small National University of Mongolia, Ulaanbaatar, Mongolia\\
\small Mongolian Academy of Sciences, Institute of Mathematics and Digital Technology\\[0.5ex]
\small \texttt{gantumur.tsogtgerel@mcgill.ca}}

\date{\today}

\begin{document}

\maketitle

\begin{abstract}
We study scalar dyadic refinement operators on \(\R^2\) of the form
\[
(Vf)(x,y)=\sum_{(j,k)\in\Z^2} c_{j,k}\,f(2x-j,\,2y-k),
\]
where only finitely many mask coefficients \(c_{j,k}\) are nonzero. Under a fixed support-window
hypothesis, we prove that for every compactly supported continuous piecewise linear seed
\(g:\R^2\to\R\), the iterates \(V^n g\) admit exact ReLU realizations of fixed width and depth \(O(n)\).

The proof gives a first genuinely two-dimensional extension of the exact realization theory for
refinement cascades. Using the one-dimensional exact loop-controller framework, it transports the
tensor-product residual dynamics exactly on the product of two polygonal loops and reduces the
remaining seam ambiguity to a final readout and selector step. The matrix cascade is then handled
by a fixed-depth recursive block, and general compactly supported \(\CPwL\) seeds are reduced to a
finite decomposition together with exact clamped gluing on the support window. This identifies the
tensor-product dyadic case as the natural first multivariate instance of the loop-controller method
for refinement iterates.
\end{abstract}

\tableofcontents

\section{Introduction}

\subsection{Background and motivation}

Neural-network approximation theory \cite{nnapprox} seeks structural explanations for why deep networks can
efficiently represent highly oscillatory, highly recursive, or highly self-similar functions. A
particularly clean result in this direction is the scalar binary theorem on refinable functions
\cite{refinable}: if
\[
(Vg)(x)=\sum_{j=0}^N c_j\,g(2x-j),
\]
and if \(g\) is compactly supported and continuous piecewise linear, then the iterates \(V^n g\)
admit exact ReLU realizations of fixed width and depth growing linearly in \(n\). In that setting,
the proof is organized around three ingredients: a cascade identity, a controlled treatment of
the residual dynamics, and a recursive fixed-depth update block for the finite-dimensional
cascade state. The result is one of the clearest examples in neural-network realization theory in
which linear growth of depth is explained by an intrinsic recursive analytic mechanism rather
than by a generic approximation argument.

The purpose of the present paper is to develop the first genuinely multivariate step beyond
that theorem. We keep the output scalar and the refinement homogeneous, but replace the
one-dimensional parameter by a two-dimensional one. Thus we study refinement operators of
the form
\begin{equation}
\label{eq:V-intro}
(Vf)(x,y)=\sum_{(j,k)\in\Z^2} c_{j,k}\,f(2x-j,\,2y-k),
\end{equation}
where only finitely many coefficients \(c_{j,k}\) are nonzero. Our goal is to understand whether,
for a compactly supported scalar \(\CPwL\) seed \(g:\R^2\to\R\), the iterates \(V^n g\) can still be
realized exactly by ReLU networks of fixed width and depth \(O(n)\).

The tensor-product dyadic case is the natural place to make this first multivariate step. On the
conceptual side, it is already a genuinely two-dimensional parameter-domain problem, while
still retaining the simplest multivariate residual structure. On the technical side, the
one-dimensional scalar binary theory may be organized in two exact ways. In the original
surrogate-based formalism \cite{refinable}, the discontinuous residual digits are replaced by continuous piecewise
linear surrogate mechanisms, and one keeps track of bad sets created at intermediate stages,
showing that their contribution is harmless at the terminal stage. It is conceivable that such a
formalism could also be extended to the tensor-product setting, but the bookkeeping would
become appreciably more involved. By contrast, the exact loop-controller formulation \cite{loop} transports
the residual orbit by a forward-exact loop state and confines the remaining seam ambiguity to
the terminal readout and selector stage. This is the simpler framework to extend to the present
setting: because the tensor-product residual dynamics is still coordinatewise, the controller space
becomes simply the product of two polygonal loops. Thus the forward residual transport remains
exact, and the genuinely new work is concentrated in the two-dimensional selector and gluing
geometry.

At the same time, the passage from one parameter variable to two is not merely cosmetic. In
one dimension, the discontinuities of the dyadic residual map occur at isolated dyadic points. In
two dimensions, the corresponding singular set is a union of dyadic grid lines, and the natural
seam region is therefore a union of thin vertical and horizontal strips. Thus the seam
bookkeeping becomes geometric rather than purely one-dimensional. Moreover, the naive
tensor-product bump \(h(x)h(y)\) is not \(\CPwL\), so one cannot simply tensorize the
one-dimensional special-hat argument. One must instead work with genuine compactly
supported \(\CPwL\) interior atoms.

Accordingly, the theorem proved below remains a tensor-product result, but it already exhibits
the first genuinely multivariate instance of the loop-controller method.

\subsection{Main theorem}

We write $\Ups_{W,L}(\ReLU;d,N)$ for the set of outputs of fully connected ReLU networks with
width $W$, depth $L$, input dimension $d$, and output dimension $N$.

Our main result is the following exact realization theorem.

\begin{theorem}
\label{thm:main}
Let \(V\) be the scalar dyadic refinement operator on \(\R^2\) given by
\[
(Vf)(x,y)=\sum_{(j,k)\in\Z^2} c_{j,k}\,f(2x-j,\,2y-k),
\]
where \(c_{j,k}=0\) for all but finitely many \((j,k)\). Let \(g:\R^2\to\R\) be compactly supported
and \(\CPwL\). Assume that there exists a fixed rectangular support window
\([0,L_1]\times[0,L_2]\) containing \(\supp g\) and preserved by \(V\), in the sense that
\[
\supp f\subset[0,L_1]\times[0,L_2]
\quad\Longrightarrow\quad
\supp(Vf)\subset[0,L_1]\times[0,L_2].
\]
Then there exist constants \(C_0,C_1>0\), depending only on the mask support, the preserved
support window, and the quantitative \(\CPwL\) complexity of \(g\), such that
\[
V^n g\in \Ups_{C_0,C_1 n}(\ReLU;2,1),
\qquad n\ge 1.
\]
In particular, the width remains bounded independently of \(n\), while the depth grows linearly
with the refinement depth.
\end{theorem}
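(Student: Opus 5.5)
The plan follows the three-part scheme of the one-dimensional theory \cite{refinable,loop}: a cascade identity, exact transport of the residual dynamics, and a fixed-depth recursive block for the finite-dimensional cascade state.

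\textbf{Step 1: cascade identity.} Write \(\Omega=[0,L_1]\times[0,L_2]\) and use the window hypothesis to restrict all indices to a finite set \(\Lambda\subset\Z^2\). Iterating \(V\) gives
\[
(V^n g)(x,y)=\sum_{(j,k)\in\Lambda} \bigl(M_{\epsilon(x,y)}\cdots\bigr)_{(j,k)}\, g\bigl(\tau^n(x,y)+(j,k)\bigr),
\]
where \(\tau(x,y)=(\{2x\},\{2y\})\) is the coordinatewise dyadic residual map. The digits \(\epsilon=(\lfloor 2x\rfloor,\lfloor 2y\rfloor)\) select finitely many transfer matrices \(M_\epsilon\), which act on a vector state \(v_m\in\R^{\Lambda}\) whose size does not depend on \(n\). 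In this form \(V^n g\) is a linear readout, applied to the state after \(n\) steps, of the vector \(\bigl(g(\tau^n(x,y)+\lambda)\bigr)_{\lambda\in\Lambda}\).

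\textbf{Step 2: exact residual transport.} I apply the one-dimensional exact loop-controller separately in \(x\) and in \(y\). That controller gives a fixed-width, fixed-depth CPwL map on a polygonal loop \(\Gamma\subset\R^2\) which conjugates the doubling map exactly. Here it is used on \(\Gamma\times\Gamma\subset\R^4\), run in parallel with no interaction between the two coordinates. This transports \(\tau^m\) exactly as a continuous loop state, so the discontinuity of \(\tau\) never has to be represented in the forward pass. The digit information needed by \(M_\epsilon\) is then read off the loop state by CPwL functions. These functions are exact away from thin seam strips around the dyadic grid lines at the current level, which are a union of vertical and horizontal strips.

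\textbf{Step 3: fixed-depth recursive block.} The update \(v_{m+1}=M_{\epsilon_m}v_m\) is implemented with a bounded number of neurons. I write it as \(\sum_\epsilon \chi_\epsilon\, M_\epsilon v_m\), with selector weights \(\chi_\epsilon\). The products of selectors with the bounded state are realized through the standard ReLU identity \(\min(\max(\cdot))\) clamp gating, using a priori bounds on \(|v_m|\) that depend on \(n\) only through the affine weights. Each block has constant width and constant depth, so \(n\) blocks give depth \(O(n)\).

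\textbf{Step 4: terminal readout, selector and seeds.} Only the last readout has to resolve the seam ambiguity. In the terminal sum the coefficient of each configuration is multiplied by \(g(\tau^n+\lambda)\), and \(g\) is continuous. So I choose the seam convention in the final selector so that the contributions from the two sides of a seam agree. Concretely, I decompose \(g\) into finitely many compactly supported CPwL interior atoms, for instance Courant hat functions on a triangulation refined so that each atom vanishes on the relevant grid lines. For each atom the value on a seam strip is then unambiguous. The atoms are handled in parallel, and this parallelism contributes only to the constant \(C_0\). Finally I glue the result onto \(\Omega\) with an exact clamped cutoff, since \(V^ng\) vanishes outside \(\Omega\).

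\textbf{Main obstacle.} The hardest step is the two-dimensional selector at the crossings of vertical and horizontal seam strips. There both digits are ambiguous at once, and one must show that the clamped interpolation of the four candidate matrix products still reproduces \(V^ng\) exactly. I would first try to prove a corner compatibility lemma. It would say that each interior atom, together with its neighbors, vanishes to the required degree on the grid lines, so that the mixed configurations all contribute the same value. If that fails, I would instead refine the atoms until each one is supported away from the crossings.
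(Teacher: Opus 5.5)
Your architecture (vectorized cascade, product-of-loops controller, gated recursive block, atoms, clamped gluing) matches the paper's. However, the mechanism that makes it exact is missing, and the claim in your Step~4 that only the last readout has to resolve seam ambiguity is false. The digit is discontinuous on the loop: it jumps at $E(0)=E(1)$ and at $E(\tfrac12)$. The selectors are continuous, so at \emph{every} stage $m<n$ there are points where some $\chi_\epsilon\notin\{0,1\}$. At those points the update $\sum_\epsilon\chi_\epsilon M_\epsilon v_m$ is not the true step, and the error is carried to the output. Clamp gating does not even produce $\chi_\epsilon M_\epsilon v_m$ there, since $\Pi_a(\lambda,y)=\min(y,\lambda a)$ for $y\ge 0$. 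Continuity of $g$, or atoms vanishing on the grid lines, gives agreement only on the seam itself, not on a transition set of positive width. So no corner compatibility lemma of the kind you describe can rescue this. Crossings are also not the real issue, since a point is already bad when a single coordinate is in transition at a single stage.

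The paper resolves this in two steps. First, it ties the transition width to $n$: $J_n=[0,\delta_n]\cup[\tfrac12,\tfrac12+\delta_n]$ with $\delta_n=\bar\delta\varrho 2^{-n}$, and it requires atoms with a fixed margin, $\supp H\subset[\varrho,1-\varrho]^2$. Then any orbit entering $J_n$ in either coordinate at a stage $j\le n$ has $r^n\in[0,2^{n-j+1}\delta_n]\subset[0,\varrho)$, hence $H(R_n)=0$ (boundary localization). Second, the network must return an exact zero on such orbits despite the blended gates.

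Your forward ordering cannot deliver that second step. Your output pairs $v_n$ with the terminal vector $\bigl(g(\tau^n+\lambda)\bigr)_\lambda$, which on good orbits is a product of two non-binary network outputs, and clamp gating does not compute such a product exactly. The paper uses that a special atom has rank-one vectorization $\Vec(H)=H\,b_{11}$. This lets it compute the scalar $H(R_n)$ \emph{first} and inject it as the initial state of the transposed recursion: $\widehat\Phi_0=H(R_n)e_\ell$ and $\widehat\Phi_j=\sum_q\Pi_{a_n}\bigl(\chi_{q_2,n},\Pi_{a_n}(\chi_{q_1,n},T_q^\top\widehat\Phi_{j-1})\bigr)$, with the selectors evaluated at the controller state $z_{j-1}$. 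This recursion consumes the digits in their natural order and is read out with $b_{11}^\top$. On good orbits all gates are $0$ or $1$. On bad orbits the state is $0$ from the start, and $\Pi_a(\lambda,0)=0$ keeps it $0$ through any number of blended stages in either coordinate.

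Your decomposition also needs a translation step. Atoms vanishing on the integer grid lines cannot sum to a seed that is nonzero on those lines. The paper places each nodal hat inside $\delta_\nu+[\varrho,1-\varrho]^2$ and uses translation covariance, $V^n[H(\cdot-\delta)]=(V^nH)(\cdot-2^{-n}\delta)$.
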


As in the one-dimensional theory, the statement is an \emph{exact realization} theorem for the
finite refinement iterates themselves. We do not formulate a separate approximation theorem
for a refinable limit function. Such an approximation statement can be obtained by combining
Theorem~\ref{thm:main} with the corresponding convergence theory for the cascade, 
as in \cite{refinable}.

\subsection{Proof idea and scope}

The proof combines the finite-dimensional cascade formalism with an exact controller for the
two-dimensional residual dynamics. First, vectorization on the preserved support window reduces
the refinement step to a finite-dimensional matrix cascade on \([0,1]^2\). Second, the residual pair is
transported exactly by a torus controller obtained as the product of two one-dimensional loop
controllers. 
Third, for a compactly supported \(\CPwL\) profile \(H\) supported away from the boundary, the scalar factor \(H(R^n(\cdot))\) is recovered by four terminal readout branches.
Fourth, the matrix cascade is
propagated by a fixed-depth recursive block using one-dimensional loop-state selectors and the
product gadget. Finally, arbitrary compactly supported \(\CPwL\) seeds are treated by a finite
decomposition into such building blocks, together with translation covariance and clamped gluing on
the support window.

The present paper is intentionally limited to the tensor-product dyadic scalar homogeneous case.
This is the cleanest setting in which a genuinely multivariate theorem can be proved, and it already
shows that the loop-controller architecture survives passage from one parameter variable to two.
At the same time, the argument suggests broader extensions, including diagonal tensor-product
dilations in higher dimension, vector-valued outputs, and certain non-diagonal expanding matrices
whose suitable iterate becomes diagonal. These directions appear plausible, but would require
separate treatment.

\subsection{Organization of the paper}

Section~\ref{sec:preliminaries} fixes notation, establishes the dyadic tensor-product cascade
identity, and introduces the special atoms used later in the proof. Section~\ref{sec:torus-controller}
develops the exact torus controller obtained as the product of two one-dimensional loop controllers
and proves the four-branch scalar readout formula for the special-atom scalar factor.
Section~\ref{sec:recursive-block} constructs the recursive realization of the matrix cascade on the
unit square, proves the special-atom theorem, and glues the vectorized realization back to the full
support window. Section~\ref{sec:general-seeds} treats general compactly supported $\CPwL$ seeds
by finite special-atom decomposition and translation covariance, and deduces
Theorem~\ref{thm:main}. Section~\ref{sec:conclusions} concludes with a brief discussion of the
method and its broader multivariate outlook.

\section{Preliminaries and notation}
\label{sec:preliminaries}

This section fixes notation and records the finite-dimensional cascade formalism underlying the
whole paper. The basic mechanism is the same as in the one-dimensional theory,
but here it is recast for scalar functions on $\R^2$ and for tensor-product dyadic refinement.
The main point is that, once the support window is fixed, the refinement step can be encoded by
a finite family of transition matrices acting on the vector of unit-square patches of the function.

\subsection{Network classes and $\CPwL$ functions}

For integers $W,L,d,N\ge 1$, let
\(\Ups_{W,L}(\ReLU;d,N)\)
denote the set of outputs of fully connected feed-forward ReLU networks with width $W$, depth
$L$, input dimension $d$, and output dimension $N$.

We write $\CPwL$ for continuous piecewise linear. Thus a scalar function $f:\R^2\to\R$ is $\CPwL$
if every compact subset of $\R^2$ admits a finite polygonal subdivision on whose cells $f$ is affine.
In particular, every compactly supported $\CPwL$ function is determined by finitely many affine
pieces.

We shall use repeatedly the following standard closure properties.

\begin{remark}
\label{rem:standard-closure}
The following facts are standard.

\begin{enumerate}[label=(\roman*)]
\item Every fixed compactly supported $\CPwL$ function on $\R^2$ has an exact finite ReLU
realization.

\item Finite sums of fixed-width depth-$O(n)$ networks can be realized by enlarging the width by a
constant factor while preserving the depth bound $O(n)$.

\item Composing a depth-$O(n)$ network with a fixed affine input translation or with a fixed
$\CPwL$ output map does not change the fact that the depth remains $O(n)$.

\item If $F\in \Ups_{W,L}(\ReLU;2,N)$ and $\Phi:\R^N\to\R^M$ is fixed and $\CPwL$, then
$\Phi\circ F$ belongs to $\Ups_{W',L+L'}(\ReLU;2,M)$ for some constants $W',L'$, depending only
on $W$ and on the fixed local complexity of $\Phi$.
\end{enumerate}
\end{remark}

We shall not track optimal constants in these closure statements. Throughout the paper, the
quantity of primary interest is the asymptotic dependence on the refinement depth $n$, with all
other geometric and combinatorial data regarded as fixed.

\subsection{Dyadic digits and tensor-product residuals}
\label{ss:digits-residuals}

We begin with the one-dimensional dyadic digit and residual maps. For $t\in[0,1)$, define
\[
Q_1(t):=\lfloor 2t\rfloor \in \{0,1\},
\qquad
r(t):=2t-Q_1(t)\in[0,1).
\]
At the endpoint $t=1$ we adopt the convention
\(Q_1(1)=1\) and \(r(1)=1\).
Thus \(r\) is the usual doubling map on the circle, written on the closed interval \([0,1]\) with
the seam identification \(0\sim 1\).

For $z=(x,y)\in[0,1]^2$, define the dyadic digit map and the tensor-product residual map
\[
Q(z):=(Q_1(x),Q_1(y)),
\qquad
R(z):=(r(x),r(y)).
\]
Thus the residual dynamics is coordinatewise.
We define the successive digits and residuals by
\[
R_0(z):=z,
\qquad
R_n:=R^n \quad (n\ge 1),
\qquad
Q_j(z):=Q(R_{j-1}(z)) \quad (j\ge 1).
\]
Equivalently, we have
\[
R_n(z)=\bigl(r^n(x),\,r^n(y)\bigr),
\qquad
Q_j(z)=\bigl(Q_1(r^{j-1}(x)),\,Q_1(r^{j-1}(y))\bigr).
\]

\begin{remark}
\label{rem:residual-affine-2d}
For each $n\ge 1$ and each dyadic square
\[
I_{\bm m,n}:=
\Big[\frac{m_1}{2^n},\frac{m_1+1}{2^n}\Big)
\times
\Big[\frac{m_2}{2^n},\frac{m_2+1}{2^n}\Big),
\qquad
\bm m=(m_1,m_2)\in\{0,\dots,2^n-1\}^2,
\]
the residual iterate is affine:
\[
R_n(x,y)=\bigl(2^n x-m_1,\;2^n y-m_2\bigr),
\qquad
(x,y)\in I_{\bm m,n}.
\]
This is immediate from the one-dimensional identity
\[
r^n(t)=2^n t - m
\qquad
\text{on }
\Big[\frac{m}{2^n},\frac{m+1}{2^n}\Big),
\]
applied in each coordinate separately.
\end{remark}

\subsection{The refinement operator and the support window}

Fix finitely many coefficients $c_{j,k}\in\R$, not all zero, and define the scalar dyadic refinement
operator
\begin{equation}
\label{eq:operator-V}
(Vf)(x,y):=\sum_{(j,k)\in\Z^2} c_{j,k}\,f(2x-j,\,2y-k).
\end{equation}
Since only finitely many coefficients are nonzero, the sum is finite for every $f$.

Let $g:\R^2\to\R$ be compactly supported and assume
\(\supp g\subset [0,L_1]\times[0,L_2]\)
for some integers $L_1,L_2\ge 1$. We assume throughout that this rectangular support window is
preserved by the operator $V$, namely, that
\begin{equation}
\label{eq:support-preservation}
\supp f\subset [0,L_1]\times[0,L_2]
\quad\Longrightarrow\quad
\supp(Vf)\subset [0,L_1]\times[0,L_2].
\end{equation}
This is the natural hypothesis under which the refinement dynamics can be encoded in a fixed
finite-dimensional state space.

\subsection{Vectorization on unit squares}

For $1\le a\le L_1$ and $1\le b\le L_2$, define the $(a,b)$-patch of a function $f$ by
\[
f_{a,b}(u,v):=f(u+a-1,\;v+b-1),
\qquad
(u,v)\in[0,1]^2.
\]
Thus $f_{a,b}$ is the restriction of $f$ to the unit square
\([a-1,a]\times[b-1,b]\),
reparameterized back to the reference square $[0,1]^2$.

The vectorization of $f$ is the finite vector-valued function
\[
G(u,v):=\Vec(f)(u,v):=\bigl(f_{a,b}(u,v)\bigr)_{1\le a\le L_1,\;1\le b\le L_2}.
\]
We fix once and for all an ordering of the index pairs $(a,b)$; the precise ordering is irrelevant,
provided it is used consistently throughout. Thus $G$ takes values in $\R^{L_1L_2}$ and records
all unit-square pieces of $f$ simultaneously on the single reference square $[0,1]^2$.

For the iterates of the seed $g$, we write
\[
G_n:=\Vec(V^n g),
\qquad n\ge 0.
\]
In particular, note that \(G_0=\Vec(g)\).

\begin{remark}
\label{rem:vectorization-philosophy}
The point of vectorization is that the refinement step no longer acts on a scalar function defined on
the whole support window, but on a finite vector of reference-square patches. Once written in this
form, the refinement step becomes multiplication by a matrix selected by the current dyadic digit
pair. This is the tensor-product analogue of the one-dimensional vectorization used in the scalar
dyadic theory.
\end{remark}

\subsection{Block transition matrices}

For each dyadic digit
\(q=(q_1,q_2)\in\{0,1\}^2\),
define the transition matrix
\(T_q \in \R^{L_1L_2\times L_1L_2}\)
as follows. Its entry from source patch $(\alpha,\beta)$ to target patch $(a,b)$ is
\begin{equation}
\label{eq:Tq-entry}
(T_q)_{(a,b),(\alpha,\beta)}
=
c_{\,q_1+2(a-1)-(\alpha-1),\;q_2+2(b-1)-(\beta-1)}.
\end{equation}
Whenever the corresponding mask index is absent, the coefficient is interpreted as zero.

The meaning of \eqref{eq:Tq-entry} is simple. On the branch where the current digit pair is
$q=(q_1,q_2)$, the arguments
$2x-j$ and  $2y-k$
can be rewritten in terms of the common residual variables
$r(x)$ and $r(y)$,
plus integer translations. The matrix $T_q$ records exactly which source patch of the function
contributes to which target patch after this rewriting.

\begin{remark}
\label{rem:Tq-interpretation}
The matrices $T_q$ are the two-dimensional tensor-product analogue of the one-dimensional
matrices $T_0,T_1$ in the scalar dyadic theory, cf. \cite{refinable,loop}. Here there are four possibilities, corresponding to
the four dyadic quadrants of the unit square.
\end{remark}

\subsection{One-step cascade identity}

We now prove the basic finite-dimensional identity behind the whole construction.

\begin{proposition}[One-step tensor-product cascade identity]
\label{prop:one-step-cascade-2d}
For every $z\in[0,1]^2$, we have
\[
G_1(z)=T_{Q(z)}\,G(R(z)).
\]
\end{proposition}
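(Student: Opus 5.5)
The plan is to verify the identity coordinatewise by direct substitution, reading $G_1=\Vec(Vg)$ and $G=\Vec(g)$; the argument uses only that $g$ is continuous with support in the window, so it applies verbatim to any such $f$ in place of $g$. Fix $z=(x,y)\in[0,1]^2$, write $q=(q_1,q_2)=Q(z)$, and fix a target patch $(a,b)$ with $1\le a\le L_1$, $1\le b\le L_2$. By definition of the patch and of $V$,
\[
(G_1)_{a,b}(x,y)=(Vg)(x+a-1,\,y+b-1)=\sum_{(j,k)}c_{j,k}\,g\bigl(2x+2(a-1)-j,\;2y+2(b-1)-k\bigr).
\]
The key observation is that $2x=q_1+r(x)$ and $2y=q_2+r(y)$ for every $x,y\in[0,1]$. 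This holds on $[0,1)$ by definition, and at the endpoint by the convention $Q_1(1)=r(1)=1$.

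Substituting, the first argument becomes $r(x)+(\alpha-1)$ with $\alpha-1:=q_1+2(a-1)-j$, and similarly $\beta-1:=q_2+2(b-1)-k$. For fixed $(a,b,q)$ the map $(j,k)\mapsto(\alpha,\beta)$ is a bijection of $\Z^2$. Reindexing the finite sum over $(\alpha,\beta)\in\Z^2$ therefore gives
\[
(G_1)_{a,b}(z)=\sum_{(\alpha,\beta)\in\Z^2}c_{\,q_1+2(a-1)-(\alpha-1),\,q_2+2(b-1)-(\beta-1)}\;g\bigl(r(x)+\alpha-1,\;r(y)+\beta-1\bigr).
\]
When $1\le\alpha\le L_1$ and $1\le\beta\le L_2$, the $g$-factor is exactly $g_{\alpha,\beta}(R(z))$. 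The coefficient is then $(T_q)_{(a,b),(\alpha,\beta)}$ by \eqref{eq:Tq-entry}, with absent mask indices read as zero.

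It remains to show that the terms with $(\alpha,\beta)$ outside the index range vanish; I expect this to be the only point needing care. Suppose, say, $\alpha\le 0$. Then $r(x)+\alpha-1\le 0$, so the point lies in $\{u\le 0\}$. Similarly, if $\alpha\ge L_1+1$, then $r(x)+\alpha-1\ge L_1$. In either case the evaluation point lies outside the open window $(0,L_1)\times(0,L_2)$. Since $g$ is continuous and vanishes off $[0,L_1]\times[0,L_2]$, it vanishes on the complement of the open rectangle, including its boundary. This boundary case is exactly the one arising at the seam values $r=0$ or $r=1$. Hence all such terms are zero.

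The sum thus collapses to $\sum_{(\alpha,\beta)}(T_q)_{(a,b),(\alpha,\beta)}\,G_{\alpha,\beta}(R(z))$, which is the $(a,b)$ component of $T_{Q(z)}G(R(z))$. The same computation with $g$ replaced by any continuous $f$ supported in the window gives $\Vec(Vf)(z)=T_{Q(z)}\Vec(f)(R(z))$. Applied to $f=V^{n}g$, which is again continuous and supported in the window by \eqref{eq:support-preservation}, this yields $G_{n+1}(z)=T_{Q(z)}G_n(R(z))$.
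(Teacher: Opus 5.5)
Your proof is correct and follows the paper's argument essentially line by line. As in the paper, you expand $(Vg)(x+a-1,y+b-1)$, substitute $2x=q_1+r(x)$ and $2y=q_2+r(y)$, reindex by $(\alpha,\beta)$, and discard out-of-window terms using the support hypothesis.

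Your one refinement is worthwhile. The paper asserts that out-of-range indices give points outside $[0,L_1]\times[0,L_2]$, but this fails when $r(x)$ or $r(y)$ lies in $\{0,1\}$, where the point can land on the boundary of the window. Your observation that a continuous $g$ with support in the closed window vanishes on that boundary is exactly what closes this case.
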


\begin{proof}
Fix $z=(x,y)\in[0,1]^2$, and write
\(Q(z)=q=(q_1,q_2)\).
Let $(a,b)$ be a target patch index. By definition of vectorization, we have
\[
(G_1)_{a,b}(x,y)
=
(Vg)(x+a-1,\;y+b-1).
\]
Using the definition of the refinement operator \eqref{eq:operator-V}, we obtain
\[
(G_1)_{a,b}(x,y)
=
\sum_{(j,k)\in\Z^2}
c_{j,k}\,
g(2x+2(a-1)-j,\;2y+2(b-1)-k).
\]
Since
\(2x=r(x)+q_1\) and \(2y=r(y)+q_2\),
this becomes
\[
(G_1)_{a,b}(x,y)
=
\sum_{(j,k)\in\Z^2}
c_{j,k}\,
g(r(x)+q_1+2(a-1)-j,\;r(y)+q_2+2(b-1)-k).
\]

Now with
\(\alpha:=q_1+2(a-1)-j+1\) and \(\beta:=q_2+2(b-1)-k+1\), we have
\[
r(x)+q_1+2(a-1)-j = r(x)+\alpha-1,
\qquad
r(y)+q_2+2(b-1)-k = r(y)+\beta-1.
\]
Whenever
\(1\le \alpha\le L_1\) and \(1\le \beta\le L_2\),
the corresponding term is exactly
\(g_{\alpha,\beta}(R(z))\).
If either index lies outside the support window, then the corresponding physical point lies outside
$[0,L_1]\times[0,L_2]$, and the term vanishes by the support assumption.

Therefore we infer
\[
(G_1)_{a,b}(z)
=
\sum_{\alpha,\beta}
c_{\,q_1+2(a-1)-(\alpha-1),\;q_2+2(b-1)-(\beta-1)}
\,G_{\alpha,\beta}(R(z)).
\]
By the definition \eqref{eq:Tq-entry} of the transition matrix $T_q$, this is precisely the
$(a,b)$-component of $T_qG(R(z))$. Since $(a,b)$ was arbitrary, the proof is complete.
\end{proof}

\begin{corollary}[Iterated cascade identity]
\label{cor:iterated-cascade-2d}
For every $n\ge 1$ and every $z\in[0,1]^2$, we have
\[
G_n(z)=T_{Q_1(z)}T_{Q_2(z)}\cdots T_{Q_n(z)}\,G(R_n(z)).
\]
\end{corollary}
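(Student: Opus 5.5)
The plan is induction on $n$, with Proposition~\ref{prop:one-step-cascade-2d} as the engine. That proposition holds for any compactly supported function $f$ with $\supp f\subset[0,L_1]\times[0,L_2]$, not only for the seed $g$: its proof used only the support inclusion and the definition of $V$. So I will apply it with $g$ replaced by $V^{n-1}g$. By the support-preservation hypothesis \eqref{eq:support-preservation} and induction, $\supp V^{n-1}g\subset[0,L_1]\times[0,L_2]$. Writing $\Vec(V^{n-1}g)=G_{n-1}$ and $\Vec(V(V^{n-1}g))=G_n$, the proposition gives, for every $z\in[0,1]^2$,
\[
G_n(z)=T_{Q(z)}\,G_{n-1}(R(z)).
\]

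For the base case $n=1$ the claim is exactly Proposition~\ref{prop:one-step-cascade-2d}, since $Q_1=Q$ and $R_1=R$.

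For the inductive step, assume $G_{n-1}(w)=T_{Q_1(w)}\cdots T_{Q_{n-1}(w)}G(R_{n-1}(w))$ for all $w\in[0,1]^2$. I will apply this at $w=R(z)$, which lies in $[0,1]^2$; this holds also at the endpoint convention $r(1)=1$. This requires two index-shift identities. The first is $Q_j(R(z))=Q(R_{j-1}(R(z)))=Q(R_j(z))=Q_{j+1}(z)$. The second is $R_{n-1}(R(z))=R^{n}(z)=R_n(z)$. Both follow directly from the definitions $R_j=R^j$ and $Q_j=Q\circ R_{j-1}$. Substituting into the displayed one-step identity gives
\[
G_n(z)=T_{Q_1(z)}\,T_{Q_2(z)}\cdots T_{Q_n(z)}\,G(R_n(z)).
\]

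There is no genuine obstacle. The only point needing care is to confirm that the one-step identity may legitimately be applied to $V^{n-1}g$ rather than to $g$. This holds because the proof of Proposition~\ref{prop:one-step-cascade-2d} used no property of $g$ beyond its support lying in the window, and support preservation propagates that property to every iterate. A secondary check is that $R$ maps $[0,1]^2$ into itself, including on the boundary, so the inductive hypothesis applies at $R(z)$.
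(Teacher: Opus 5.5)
Your proposal is correct and is essentially the paper's proof: induction on $n$, applying Proposition~\ref{prop:one-step-cascade-2d} to the iterate $V^{n-1}g$ to peel off the factor $T_{Q(z)}$, then using the inductive hypothesis at $R(z)$ together with the shifts $Q_j(R(z))=Q_{j+1}(z)$ and $R_{n-1}(R(z))=R_n(z)$. You make explicit two checks that the paper leaves implicit: support preservation for the iterates, and $R([0,1]^2)\subset[0,1]^2$. Strictly speaking, the one-step identity also needs the function to vanish on the boundary of the window, for instance at $x=\tfrac12$, where $r(x)=0$ and the point $L_1$ is reached. This holds here because every iterate $V^{n-1}g$ is continuous.
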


\begin{proof}
We argue by induction on $n$. The case $n=1$ is exactly
Proposition~\ref{prop:one-step-cascade-2d}.

Assume the formula holds for some $n\ge 1$. Then we have
\[
G_{n+1}(z)
=
\Vec(V^{n+1}g)(z)
=
\Vec\bigl(V(V^n g)\bigr)(z).
\]
Applying Proposition~\ref{prop:one-step-cascade-2d} to the seed $V^n g$, we obtain
\[
G_{n+1}(z)=T_{Q(z)}\,G_n(R(z)).
\]
Now use the induction hypothesis at the point $R(z)$:
\[
G_n(R(z))
=
T_{Q_1(R(z))}\cdots T_{Q_n(R(z))}\,G(R_n(R(z))).
\]
Taking into account that
\(Q_j(R(z))=Q_{j+1}(z)\) and
\(R_n(R(z))=R_{n+1}(z)\)
we have
\[
G_n(R(z))
=
T_{Q_2(z)}\cdots T_{Q_{n+1}(z)}\,G(R_{n+1}(z)).
\]
Substituting this into the previous display gives
\[
G_{n+1}(z)
=
T_{Q_1(z)}T_{Q_2(z)}\cdots T_{Q_{n+1}(z)}\,G(R_{n+1}(z)),
\]
which is exactly the required formula with $n+1$ in place of $n$.
\end{proof}

\subsection{The product gadget}
\label{subsec:product-gadget}

We record here the standard product gadget from \cite[Lemma 9]{refinable} that will be used repeatedly in the recursive block.
Its role is not to implement general multiplication of two variable network outputs, but to gate a
vector-valued state by a scalar selector and to annihilate states that are already zero.

\begin{lemma}[Product gadget]
\label{lem:product-gadget}
Let $N\in\N$ and $a>0$. Define
\[
\Pi_a(\lambda,y)
:=
-\ReLU(\lambda a\one-y)
-\ReLU((1-\lambda)a\one-\ReLU(-y))
+a\one,
\]
for $\lambda\in[0,1]$ and $y\in\R^N$, where $\one\in\R^N$ is the vector of all ones. Then
\[
\Pi_a\in \Ups_{2N+1,2}(\ReLU;N+1,N),
\]
and for every $y\in[-a,a]^N$ and every $\lambda\in[0,1]$ one has
\[
\Pi_a(1,y)=y,
\qquad
\Pi_a(0,y)=0,
\qquad
\Pi_a(\lambda,0)=0.
\]
\end{lemma}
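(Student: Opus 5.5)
The proof is a direct coordinatewise verification: I will check the three identities from the piecewise form of $\ReLU$, then exhibit an explicit two-hidden-layer network realizing $\Pi_a$. Since $\one$ acts componentwise, every identity reduces to the scalar case $N=1$, with $y\in[-a,a]$ a real number. The only facts needed are that $\ReLU(s)=s$ for $s\ge 0$ and $\ReLU(s)=0$ for $s\le 0$. The three identities hold exactly on the stated domain $y\in[-a,a]^N$, $\lambda\in[0,1]$; outside it the formula still defines a network but no identity is claimed.

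For the identities, I take each case in turn.
\begin{itemize}
\item For $\lambda=1$: since $y\le a$, we have $a-y\ge0$, so the first term is $-(a-y)$. The second term is $-\ReLU(-\ReLU(-y))=0$, because $-\ReLU(-y)\le 0$. Hence $\Pi_a(1,y)=-(a-y)+a=y$.
\item For $\lambda=0$: the first term is $-\ReLU(-y)$. Since $y\ge -a$, we have $0\le \ReLU(-y)\le a$, so the second term is $-(a-\ReLU(-y))$. Adding $a$ gives $-\ReLU(-y)-a+\ReLU(-y)+a=0$.
\item For $y=0$: the value is $-\ReLU(\lambda a)-\ReLU((1-\lambda)a)+a$. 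With $\lambda\in[0,1]$ both arguments are nonnegative, so this equals $-\lambda a-(1-\lambda)a+a=0$.
\end{itemize}

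For the membership $\Pi_a\in\Ups_{2N+1,2}(\ReLU;N+1,N)$, I build the network layer by layer.
\begin{itemize}
\item The first hidden layer takes the input $(\lambda,y)$ and computes the $N$ neurons $\ReLU(\lambda a-y_i)$, the $N$ neurons $\ReLU(-y_i)$, and one neuron $\ReLU(\lambda)$. These are all affine-then-$\ReLU$ operations, giving width $2N+1$.
\item The second hidden layer computes the $N$ neurons $\ReLU\bigl((1-\lambda)a-\ReLU(-y_i)\bigr)$. Their arguments are affine in the first-layer outputs once $\lambda$ is replaced by $\ReLU(\lambda)$.
\item The same layer forwards the $N$ values $\ReLU(\lambda a-y_i)$ unchanged. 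This is exact because they are nonnegative, so $\ReLU$ fixes them. The width of this layer is $2N\le 2N+1$.
\item The output layer is the affine map $-u_i-v_i+a$, giving output dimension $N$.
\end{itemize}

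There is one subtlety in this construction. Replacing $\lambda$ by $\ReLU(\lambda)$ changes the network only for $\lambda<0$, so the realized function agrees with the formula for $\Pi_a$ whenever $\lambda\ge 0$. This covers $\lambda\in[0,1]$, which is all the lemma's identities require. I do not expect a genuine obstacle beyond this bookkeeping and the depth convention (counting hidden layers).
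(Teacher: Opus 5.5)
Your proof is correct. The three identities follow coordinatewise exactly as you compute them. Your two-hidden-layer network realizes $\Pi_a$ for $\lambda\ge 0$: the first layer consists of $\ReLU(\lambda a-y_i)$, $\ReLU(-y_i)$ and $\ReLU(\lambda)$, of width $2N+1$; the second layer consists of the $N$ gates $\ReLU\bigl(a-a\ReLU(\lambda)-\ReLU(-y_i)\bigr)$ together with the $N$ forwarded nonnegative values.

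The paper gives no proof of this lemma; it simply imports the gadget from \cite[Lemma~9]{refinable}. Your argument is therefore a self-contained substitute for that citation rather than a different route.

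The point in your write-up worth keeping is the caveat $\lambda\ge 0$. With a single spare neuron you pass $\lambda$ through as $\ReLU(\lambda)$, so the width count $2N+1$ depends on restricting to $\lambda\in[0,1]$. This is harmless: $\Pi_a$ is only defined for such $\lambda$. Moreover, in the recursive block of Section~\ref{sec:recursive-block} the gadget is only ever fed selector values $\chi_{q,n}(\cdot)\in[0,1]$.
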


\subsection{Special atoms}
\label{subsec:special-atoms}

Fix once and for all $0<\varrho<\frac12$. The specific value of $\varrho$ is not important; its role
is simply to keep the support of the distinguished atoms away from the boundary of the reference
square.

\begin{definition}[Special atom]
\label{def:special-atom}
A special atom is a compactly supported non-negative $\CPwL$ function
\(H:\R^2\to[0,\infty)\)
such that
\[
\supp H\subset[\varrho,1-\varrho]^2,
\qquad\text{and}\qquad
H\ \text{has finite polygonal complexity.}
\]
\end{definition}

The support condition is the two-dimensional replacement for the one-dimensional special-hat
condition. It is exactly what allows the terminal seam ambiguity of the loop readout to be absorbed
by vanishing near the boundary of the square.

\section{Exact torus controller and scalar readout}
\label{sec:torus-controller}

This section develops the controller and scalar readout used in the rest of the paper. The
underlying ingredient is the one-dimensional exact loop controller from \cite{loop}, which in the
present binary scalar setting can be written in a concrete form and then used coordinatewise. In
particular, the tensor-product residual dynamics on \([0,1]^2\) is transported exactly on the
product of two polygonal loops. Thus no forward iteration of scalar residual surrogates is needed;
the only remaining seam ambiguity comes from the loop parametrization and is handled at the
terminal readout stage. Since the later argument depends directly on this controller, we include a
short proof of the binary one-dimensional construction in the concrete form needed here.

\subsection{A polygonal loop for the one-dimensional residual}
\label{subsec:polygonal-loop}

Recall from \S\ref{ss:digits-residuals} the one-dimensional dyadic residual map
\(r(t)=2t-\lfloor 2t\rfloor\) for \(t\in[0,1]\),
together with the endpoint convention $r(1)=1$. 
Thus \(r\) is the usual doubling map on the circle, written on the closed interval \([0,1]\) with
the seam identification \(0\sim 1\).

We use a concrete polygonal model of this circle dynamics. 
Let \(a_0=(0,0)\), \(a_1=(1,1)\), and \(a_2=(1,0)\).
Let $\Delta\subset\R^2$ be the closed triangle with vertices $a_0,a_1,a_2$, and denote its boundary by
\(\Gamma=\partial\Delta\).
We parametrize $\Gamma$ by the continuous piecewise affine map
\(E:[0,1]\to\Gamma\)
defined by
\begin{equation}
\label{eq:E-loop}
E(t)=
\begin{cases}
(3t,3t), & 0\le t\le \frac13,\\[4pt]
(1,2-3t), & \frac13\le t\le \frac23,\\[4pt]
(3-3t,0), & \frac23\le t\le 1.
\end{cases}
\end{equation}
Then $E(0)=E(1)=a_0$, and $E$ is injective on $[0,1)$. Thus $\Gamma$ is a polygonal realization of
the residual circle, with the single seam identification encoded by the equality $E(0)=E(1)$.

\begin{proposition}[Exact loop controller]
\label{prop:1d-loop-controller}
There exists a fixed $\CPwL$ map
\(F:\R^2\to\R^2\)
such that
\[
F(E(t))=E(r(t)),
\qquad t\in[0,1].
\]
Moreover, $F$ admits an exact finite ReLU realization, with width and depth depending only on its
piecewise-affine complexity.
\end{proposition}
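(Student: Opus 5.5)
The plan is to define $F$ first on the loop $\Gamma$ by the formula $F:=E\circ r\circ E^{-1}$, check that this is a well-defined, continuous, piecewise affine self-map of $\Gamma$, and then extend it to a $\CPwL$ map on all of $\R^2$. Since $E$ is injective on $[0,1)$, the only point of $\Gamma$ with two preimages is $a_0=E(0)=E(1)$. Our endpoint convention gives $r(0)=0$ and $r(1)=1$, so the two candidate values there are $E(0)$ and $E(1)$. These agree, hence $F$ is well defined on $\Gamma$.

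For continuity along $\Gamma$, the key point is that $r$ is continuous on $[0,1]$ except at $t=\frac12$. There $r(\frac12^-)=1$ while $r(\frac12)=0$. After composing with $E$ both one-sided limits give $a_0$, so $E\circ r$ is continuous on $[0,1]$. At the seam $t\to 1^-$ we have $E(r(t))\to E(1)=a_0=E(r(0))$, which is consistent with the identification $0\sim 1$. Next I will verify piecewise affinity. On each interval $[\frac{k}{6},\frac{k+1}{6}]$ with $k=0,\dots,5$, the map $r$ is affine with slope $2$ (taking the right-endpoint value by continuity of $E\circ r$). Its image is an interval $[\frac{2k}{6},\frac{2k+2}{6}]$ modulo $1$, which lies inside a single affine piece $[\frac{j}{3},\frac{j+1}{3}]$ of $E$. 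Hence $E\circ r$ is affine on each such interval. Since $E$ is affine on each third, $F$ restricted to each edge of $\Gamma$ is affine between consecutive points $E(\frac{k}{6})$. This is a short explicit check: compute the six image points $E(r(\frac k6))$ and confirm that the maps are affine between them.

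To extend $F$ to $\R^2$, I will fix a large square $S\supset\Delta$ and a triangulation of $S$ whose vertex set contains the six points $E(\frac k6)$ and in which every segment $[E(\frac k6),E(\frac{k+1}6)]$ is a union of edges. I assign the already determined values of $F$ at vertices on $\Gamma$ and arbitrary values (say $0$) at the other vertices, and interpolate linearly on each triangle. This gives a $\CPwL$ map $\tilde F:S\to\R^2$ that agrees with $F$ on $\Gamma$, because the interpolant is affine along each edge and matches at its endpoints. Then I set $F:=\tilde F\circ P$, where $P$ is the coordinatewise clamp onto $S$, built from $\min$ and $\max$. Then $F$ is $\CPwL$ on $\R^2$ with finitely many affine pieces, and it coincides with $E\circ r\circ E^{-1}$ on $\Gamma$. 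This yields $F(E(t))=E(r(t))$ for all $t\in[0,1]$.

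For the ReLU realization, each coordinate of $F$ is a continuous piecewise linear function on $\R^2$ with finitely many pieces. Such functions are exactly representable by ReLU networks; one route is the standard max--min representation of $\CPwL$ functions, of which Remark~\ref{rem:standard-closure}(i) is the compactly supported instance. Alternatively, one can realize $P$ directly with ReLUs and realize $\tilde F$ as a finite combination of hat functions on the triangulation. The width and depth depend only on the number of pieces. The main (though mild) obstacle is the seam bookkeeping at $t=\frac12$ and $t=1$, that is, checking that the discontinuity of $r$ disappears after composition with $E$. The rest is routine interpolation.
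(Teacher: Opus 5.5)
Your proposal is correct and follows the paper's proof essentially step for step. You define $F$ on $\Gamma$ by $F(E(t)):=E(r(t))$, check well-definedness at the seam $a_0=E(0)=E(1)$ and affinity between the six breakpoints $E(k/6)$, extend via a triangulation compatible with these breakpoints, and realize the two components by a standard exact ReLU representation of $\CPwL$ functions; your explicit continuity check at $t=\tfrac12$ and the clamp onto a square $S$ simply make precise the paper's ``any fixed piecewise affine rule'' for extending outside $\Delta$.
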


\begin{proof}
Define first a boundary map
\(F_\Gamma:\Gamma\to\Gamma\)
by
\(F_\Gamma(E(t)):=E(r(t))\) for \(t\in[0,1]\).
This is well defined because the only identification in the parametrization $E$ is $E(0)=E(1)$, and
\[
E(r(0))=E(0)=a_0=E(1)=E(r(1)).
\]
More explicitly, $F_\Gamma$ is given on the three sides of $\Gamma$ by
\[
F_\Gamma(z)=
\begin{cases}
(2s,2s), & z=(s,s),\ 0\le s\le \frac12,\\[4pt]
(1,2-2s), & z=(s,s),\ \frac12\le s\le 1,\\[4pt]
(2y-1,0), & z=(1,y),\ \frac12\le y\le 1,\\[4pt]
(1-2y,1-2y), & z=(1,y),\ 0\le y\le \frac12,\\[4pt]
(1,2x-1), & z=(x,0),\ \frac12\le x\le 1,\\[4pt]
(2x,0), & z=(x,0),\ 0\le x\le \frac12.
\end{cases}
\]
Thus $F_\Gamma$ is continuous and piecewise affine on $\Gamma$.

Now choose any continuous piecewise affine extension of $F_\Gamma$ from $\Gamma$ to the whole
plane. For example, one may triangulate $\Delta$ compatibly with the six boundary breakpoints
\[
\textstyle(0,0),\ (\frac12,\frac12),\ (1,1),\ (1,\frac12),\ (1,0),\ (\frac12,0),
\]
extend affinely on each subtriangle of $\Delta$, and then extend further outside $\Delta$ by any fixed
piecewise affine rule on a finite polygonal subdivision of a surrounding polygonal region. This yields a global
$\CPwL$ map
\(F:\R^2\to\R^2\)
such that
\(F|_\Gamma = F_\Gamma\).
Hence
\[
F(E(t))=F_\Gamma(E(t))=E(r(t)),
\qquad t\in[0,1].
\]
Finally, since \(F:\R^2\to\R^2\) is a \(\CPwL\) map, each of its two scalar components admits an
exact depth-\(2\) ReLU realization by \cite{relu-fem}. Realizing these two components in parallel
gives the stated network-realizability property for \(F\).
\end{proof}

Iterating the controller immediately yields exact transport of the residual orbit.

\begin{corollary}[Exact one-dimensional loop-state transport]
\label{cor:1d-loop-iterate}
For every $n\ge 0$ and every $t\in[0,1]$,
\[
F^n(E(t))=E(r^n(t)).
\]
\end{corollary}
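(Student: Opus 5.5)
The plan is a straightforward induction on $n$, using Proposition~\ref{prop:1d-loop-controller} as the only input. For $n=0$ the identity reads $E(t)=E(t)$, since $F^0$ is the identity and $r^0(t)=t$. Now suppose $F^n(E(t))=E(r^n(t))$ holds for all $t\in[0,1]$. Apply $F$ to both sides. This gives $F^{n+1}(E(t))=F(E(r^n(t)))$.

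To finish the step, invoke Proposition~\ref{prop:1d-loop-controller} at the point $s:=r^n(t)$. This is legitimate because $r$ maps $[0,1]$ into $[0,1]$: on $[0,1)$ we have $r(t)\in[0,1)$, and by the endpoint convention $r(1)=1$. Hence $s\in[0,1]$, and the proposition yields $F(E(s))=E(r(s))=E(r^{n+1}(t))$, which closes the induction.

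The only point to check is that the endpoint convention is used consistently. In the corollary, $r^n$ means iteration of the map $r$ on $[0,1]$, with $r(1)=1$, exactly as in the proposition. So the seam identification $E(0)=E(1)$ creates no ambiguity: the proposition already holds for every $t\in[0,1]$, including the endpoint $t=1$. I expect no genuine obstacle. The entire content lies in the conjugacy $F\circ E=E\circ r$, and its iteration is formal.
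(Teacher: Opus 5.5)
Your proposal is correct and is essentially the paper's own proof: induction on $n$, with the step $F^{n+1}(E(t))=F(E(r^n(t)))=E(r^{n+1}(t))$ obtained by applying Proposition~\ref{prop:1d-loop-controller} at $s=r^n(t)$. Your explicit check that $r$ maps $[0,1]$ into itself, including the convention $r(1)=1$, is a detail the paper leaves implicit but does not change the argument.
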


\begin{proof}
The case $n=0$ is tautological. If the identity holds at level $n$, then
\[
F^{n+1}(E(t))
=
F(F^n(E(t)))
=
F(E(r^n(t)))
=
E(r^{n+1}(t))
\]
by Proposition~\ref{prop:1d-loop-controller}. This proves the claim by induction.
\end{proof}

\subsection{The torus controller}
\label{subsec:tensor-product-controller}

We now pass from the one-dimensional loop controller to the tensor-product setting relevant for the
present paper. Define the controller embedding
\[
\mathcal E:[0,1]^2\to \Gamma\times\Gamma\subset\R^4,
\qquad
\mathcal E(x,y):=\bigl(E(x),E(y)\bigr),
\]
and the controller update map
\[
\mathcal F:\R^4\to\R^4,
\qquad
\mathcal F(z_1,z_2):=\bigl(F(z_1),F(z_2)\bigr).
\]
Thus $\mathcal E$ embeds the residual point $(x,y)$ into the product loop $\Gamma\times\Gamma$, which is a topological torus,
and $\mathcal F$ updates the two loop coordinates independently.

\begin{proposition}[Exact torus controller]
\label{prop:exact-torus-controller}
For every $n\ge 0$ and every $(x,y)\in[0,1]^2$, we have
\[
\mathcal F^n(\mathcal E(x,y))=\mathcal E(R_n(x,y)).
\]
Equivalently, if we define the forward controller states by
\(z_n(x,y):=\mathcal F^n(\mathcal E(x,y))\),
then it holds that
\[
z_n(x,y)=\bigl(E(r^n(x)),\,E(r^n(y))\bigr),
\qquad n\ge 0.
\]
\end{proposition}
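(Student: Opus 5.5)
The proof is a coordinatewise reduction to Corollary~\ref{cor:1d-loop-iterate}, carried out by induction on $n$. First I would record that $\mathcal F$ acts diagonally on $\R^2\times\R^2$. Since $\mathcal F(z_1,z_2)=(F(z_1),F(z_2))$, a trivial induction gives
\[
\mathcal F^n(z_1,z_2)=\bigl(F^n(z_1),F^n(z_2)\bigr)
\qquad\text{for all } n\ge 0 \text{ and } (z_1,z_2)\in\R^4.
\]
The case $n=0$ is the identity. For the inductive step,
\[
\mathcal F^{n+1}(z_1,z_2)=\mathcal F\bigl(F^n(z_1),F^n(z_2)\bigr)=\bigl(F^{n+1}(z_1),F^{n+1}(z_2)\bigr).
\]

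Next I would substitute $(z_1,z_2)=\mathcal E(x,y)=(E(x),E(y))$ with $(x,y)\in[0,1]^2$. Applying Corollary~\ref{cor:1d-loop-iterate} separately in each coordinate, with $t=x$ and then with $t=y$, gives
\[
\mathcal F^n(\mathcal E(x,y))=\bigl(E(r^n(x)),E(r^n(y))\bigr).
\]
By the coordinatewise description $R_n(x,y)=(r^n(x),r^n(y))$ from \S\ref{ss:digits-residuals}, and since $r^n(x),r^n(y)\in[0,1]$, the right-hand side is exactly $\mathcal E(R_n(x,y))$. This proves both displayed forms of the statement at once, because $z_n$ is defined as $\mathcal F^n(\mathcal E(x,y))$.

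There is essentially no obstacle here. The only point needing care is the endpoint convention $r(1)=1$. The equality $R_n=(r^n,r^n)$ holds on the closed square $[0,1]^2$ with this convention, and Corollary~\ref{cor:1d-loop-iterate} is stated for all $t\in[0,1]$ including $t=1$. So no seam case needs separate treatment at the forward-transport stage; the seam ambiguity only matters later, at the readout.
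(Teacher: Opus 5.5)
Your proposal is correct and is essentially the paper's argument: a coordinatewise reduction to Corollary~\ref{cor:1d-loop-iterate}. The paper inducts directly on $z_n$ instead of first factoring $\mathcal F^n=(F^n,F^n)$, which is only a cosmetic reordering of the same steps.
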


\begin{proof}
Recall from \S\ref{ss:digits-residuals} that
\(R_n(x,y)=(r^n(x),\,r^n(y))\).
The case $n=0$ is immediate from the definition of $\mathcal E$. Suppose the identity holds at stage
$n$. Then we see
\[
z_{n+1}(x,y)
=
\mathcal F(z_n(x,y))
=
\mathcal F\bigl(E(r^n(x)),\,E(r^n(y))\bigr)
=
\bigl(F(E(r^n(x))),\,F(E(r^n(y)))\bigr).
\]
Applying Corollary~\ref{cor:1d-loop-iterate} in each coordinate gives
\[
z_{n+1}(x,y)
=
\bigl(E(r^{n+1}(x)),\,E(r^{n+1}(y))\bigr)
=
\mathcal E(R_{n+1}(x,y)).
\]
This proves the result by induction.
\end{proof}

\begin{remark}
\label{rem:torus-controller-philosophy}
The point of Proposition~\ref{prop:exact-torus-controller} is that the forward residual dynamics is
now exact. No scalar surrogate residuals are iterated forward. All seam ambiguity is postponed to
the terminal readout and selector stage.
\end{remark}

Because both $\mathcal E$ and $\mathcal F$ are fixed $\CPwL$ maps,
Proposition~\ref{prop:exact-torus-controller} has the following immediate network-theoretic
consequence.

\begin{corollary}[Network realization of the forward controller]
\label{cor:controller-network}
There exist constants $C_0,C_1>0$, depending only on the fixed local complexities of $E$ and $F$,
such that
\[
z_n\in \Ups_{C_0,C_1 n}(\ReLU;2,4),
\qquad n\ge 0.
\]
\end{corollary}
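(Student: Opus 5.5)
The plan is to write \(z_n\) as a fixed \(\CPwL\) input layer followed by \(n\) copies of one fixed \(\CPwL\) update block, and to realize each piece by a ReLU network of bounded width and depth. The input layer is \(\mathcal E\), and the update block is \(\mathcal F\).

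First, by Proposition~\ref{prop:exact-torus-controller}, \(z_n=\mathcal F^n\circ\mathcal E\) on \([0,1]^2\). The map \(\mathcal E(x,y)=(E(x),E(y))\) has components that are univariate \(\CPwL\) functions of \(x\) or of \(y\) alone, with breakpoints at \(0,\tfrac13,\tfrac23,1\). Outside \([0,1]\) we extend \(E\) by any fixed affine or constant rule. Then \(\mathcal E\) is a fixed \(\CPwL\) map \(\R^2\to\R^4\), and it has an exact ReLU realization of some width \(W_E\) and depth \(L_E\).

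Next, Proposition~\ref{prop:1d-loop-controller} realizes \(F\) exactly with some width \(W_F\) and depth \(L_F\), both constant. Running two copies of this network in parallel on the two \(\R^2\)-blocks of the input realizes \(\mathcal F\) with width \(2W_F\) and depth \(L_F\).

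The one technical point is composing networks: a depth-\(L\) network's output layer is affine, so it merges with the next network's input affine map. Hence stacking \(\mathcal E\) followed by \(n\) copies of \(\mathcal F\) gives a network of width \(\max(W_E,2W_F)\). Its depth is \(L_E+nL_F\), which for \(n\ge1\) is at most \((L_E+L_F)n\). For \(n=0\) the network \(\mathcal E\) alone already works, after adjusting the constants, for example taking \(C_1\) large and allowing the trivial depth convention there. No identity channels have to be carried, because the state \(z_n\) itself is the only thing propagated. This sidesteps the usual difficulty with passing signed values through ReLU, which would otherwise require \(\ReLU(u)-\ReLU(-u)\) pairs.

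The main, though minor, obstacle is this bookkeeping of composing a depth-\(L\) and a depth-\(L'\) network into depth \(L+L'\), or \(L+L'-1\), without losing width. This follows from Remark~\ref{rem:standard-closure}(iii)--(iv) applied iteratively, with constants independent of \(n\). So we can take \(C_0=\max(W_E,2W_F)\) and \(C_1=L_E+L_F\).
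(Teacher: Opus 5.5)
Your proposal is correct and is essentially the paper's argument: a fixed block realizing $\mathcal E$ followed by $n$ copies of a fixed block realizing $\mathcal F$, giving width bounded independently of $n$ and depth $L_E+nL_F=O(n)$. Your extra bookkeeping just makes explicit what the paper leaves implicit: extending $E$ off $[0,1]$, merging the affine layers at each junction, and the degenerate $n=0$ depth convention, which the paper also glosses over.
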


\begin{proof}
The embedding $\mathcal E$ is a fixed $\CPwL$ map, hence it admits an exact finite ReLU realization of
constant width and depth. Likewise, $\mathcal F$ is a fixed $\CPwL$ map on $\R^4$, so it also admits an
exact finite ReLU realization of constant width and depth. Since
\(z_n=\mathcal F^n\circ \mathcal E\),
the map $z_n$ is realized by one fixed input block for $\mathcal E$, followed by $n$ copies of the
fixed update block for $\mathcal F$. This gives fixed width and depth $O(n)$.
\end{proof}

\subsection{One-dimensional terminal readouts}
\label{subsec:1d-terminal-readouts}

To use the exact controller, we must recover the one-dimensional residual coordinate
from a point on the loop. Since \(E(0)=E(1)\), there is no single globally exact readout on
\(\Gamma\). The corresponding two-readout device from \cite{loop}, which we simply recall here in
the form needed below, is as follows.

\begin{lemma}[One-dimensional terminal readouts]
\label{lem:1d-terminal-readouts}
Fix numbers
\(0<\bar\varepsilon<\varrho<\frac12\).
Then there exist $\CPwL$ maps
\(\rho^- , \rho^+ : \R^2\to [0,1]\)
with the following properties.

\begin{enumerate}[label=(\roman*)]
\item The readouts are exact away from complementary seam neighborhoods:
\[
\rho^-(E(t))=t \quad \text{for } t\in[\bar\varepsilon,1],
\qquad
\rho^+(E(t))=t \quad \text{for } t\in[0,1-\bar\varepsilon].
\]

\item For every one-dimensional special-hat $h$ satisfying
\(\supp h\subset[\varrho,1-\varrho]\),
one has the exact identity
\[
h(t)=\min\bigl\{h(\rho^-(E(t))),\,h(\rho^+(E(t)))\bigr\},
\qquad t\in[0,1].
\]

\item The maps $\rho^\pm\circ E:[0,1]\to[0,1]$ admit exact finite ReLU realizations.
\end{enumerate}
\end{lemma}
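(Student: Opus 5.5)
The plan is to construct \(\rho^\pm\) explicitly as continuous piecewise affine maps on the plane. Each one inverts \(E\) on \(\Gamma\) except in a small neighborhood of the seam vertex \(a_0\), where it is allowed to collapse one side of the seam onto the other. The natural choice cuts \(\Gamma\) at a point close to \(a_0\) on one of the two edges meeting there.

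\textbf{Construction of \(\rho^-\).} We first define \(\rho^-\) on \(\Gamma\) through the parameter. For \(t\in[\bar\varepsilon,1]\) set \(\rho^-(E(t))=t\). For \(t\in[0,\bar\varepsilon]\), which is a short segment of the diagonal edge starting at \(a_0\), interpolate affinely from the value \(1\) at \(E(0)=a_0\) to the value \(\bar\varepsilon\) at \(E(\bar\varepsilon)\). This is well defined at \(a_0\) because \(E(1)=a_0\) already carries the value \(1\). It is continuous and piecewise affine in the arclength of \(\Gamma\), with breakpoints at the vertices \(a_0,a_1,a_2\) and at \(E(\bar\varepsilon)\), and it takes values in \([0,1]\). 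Symmetrically, \(\rho^+\) is the identity readout \(t\) on \([0,1-\bar\varepsilon]\). On \([1-\bar\varepsilon,1]\), a segment of the bottom edge ending at \(a_0\), it interpolates affinely from \(1-\bar\varepsilon\) to the value \(0\) at \(a_0=E(0)\).

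\textbf{Extension to \(\R^2\).} We extend both maps to \(\R^2\) as in the proof of Proposition~\ref{prop:1d-loop-controller}. Triangulate \(\Delta\) compatibly with the boundary breakpoints and extend affinely on each subtriangle. Outside \(\Delta\), use any finite polygonal subdivision, then compose with the clamp \(s\mapsto\min\{1,\max\{0,s\}\}\) so that the values stay in \([0,1]\). The boundary values already lie in \([0,1]\), so the clamp does not change them on \(\Gamma\). This yields (i) directly. For (iii), \(\rho^\pm\circ E\) is a composition of fixed \(\CPwL\) maps from \([0,1]\) to \([0,1]\). It therefore has an exact finite ReLU realization by Remark~\ref{rem:standard-closure} and \cite{relu-fem}.

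\textbf{Proof of (ii).} This is the only step requiring care, and I expect it to be the main check rather than a real obstacle. Fix \(t\in[0,1]\) and split into three cases.
\begin{itemize}
\item If \(t\in[\bar\varepsilon,1-\bar\varepsilon]\), both readouts return \(t\), so the minimum equals \(h(t)\).
\item If \(t\in[0,\bar\varepsilon)\), then \(\rho^+(E(t))=t\), while \(\rho^-(E(t))\) lies in \([\bar\varepsilon,1]\). Since \(t<\bar\varepsilon<\varrho\), we have \(h(t)=0\). Because \(h\ge0\), the minimum of \(h(t)=0\) and the nonnegative value \(h(\rho^-(E(t)))\) is \(0=h(t)\).
\item If \(t\in(1-\bar\varepsilon,1]\), the argument is the same with the roles of the two readouts swapped, using \(t>1-\varrho\).
\end{itemize}
This argument uses the nonnegativity of special hats, the one-dimensional analogue of Definition~\ref{def:special-atom}, so I will state that assumption explicitly. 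It also needs \(\bar\varepsilon<\varrho\), so that the seam neighborhoods lie inside the region where \(h\) vanishes. Beyond that, it only remains to verify that the interpolated boundary data are continuous at \(E(\bar\varepsilon)\) and at \(a_0\).
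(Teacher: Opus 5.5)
Your construction is correct. The paper gives no proof of this lemma: it recalls it from \cite{loop}, and Figure~\ref{fig:1d-terminal-readouts} only sketches readouts that equal the identity off a seam interval of width $\bar\varepsilon$. Your explicit boundary data, triangulated and clamped extension, and case split for (ii) supply exactly that construction, and the split is the same one the paper uses in proving Theorem~\ref{thm:special-atom-scalar-factor}.

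Your explicit nonnegativity hypothesis on $h$ is unavoidable, not merely convenient. By (i) and the intermediate value theorem, $\rho^-\circ E$ attains every value in $[\bar\varepsilon,1]\supset\supp h$ on $[0,\bar\varepsilon]$. On that interval $\rho^+\circ E$ is the identity and $h$ vanishes. So (ii) fails for every pair $\rho^\pm$ satisfying (i) as soon as $h$ is negative somewhere.
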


Figure~\ref{fig:1d-terminal-readouts} gives a schematic view of the complementary readouts
$\rho^{-}$ and $\rho^{+}$ from Lemma~\ref{lem:1d-terminal-readouts}.

\begin{figure}[ht]
\centering
\includegraphics[width=.7\textwidth]{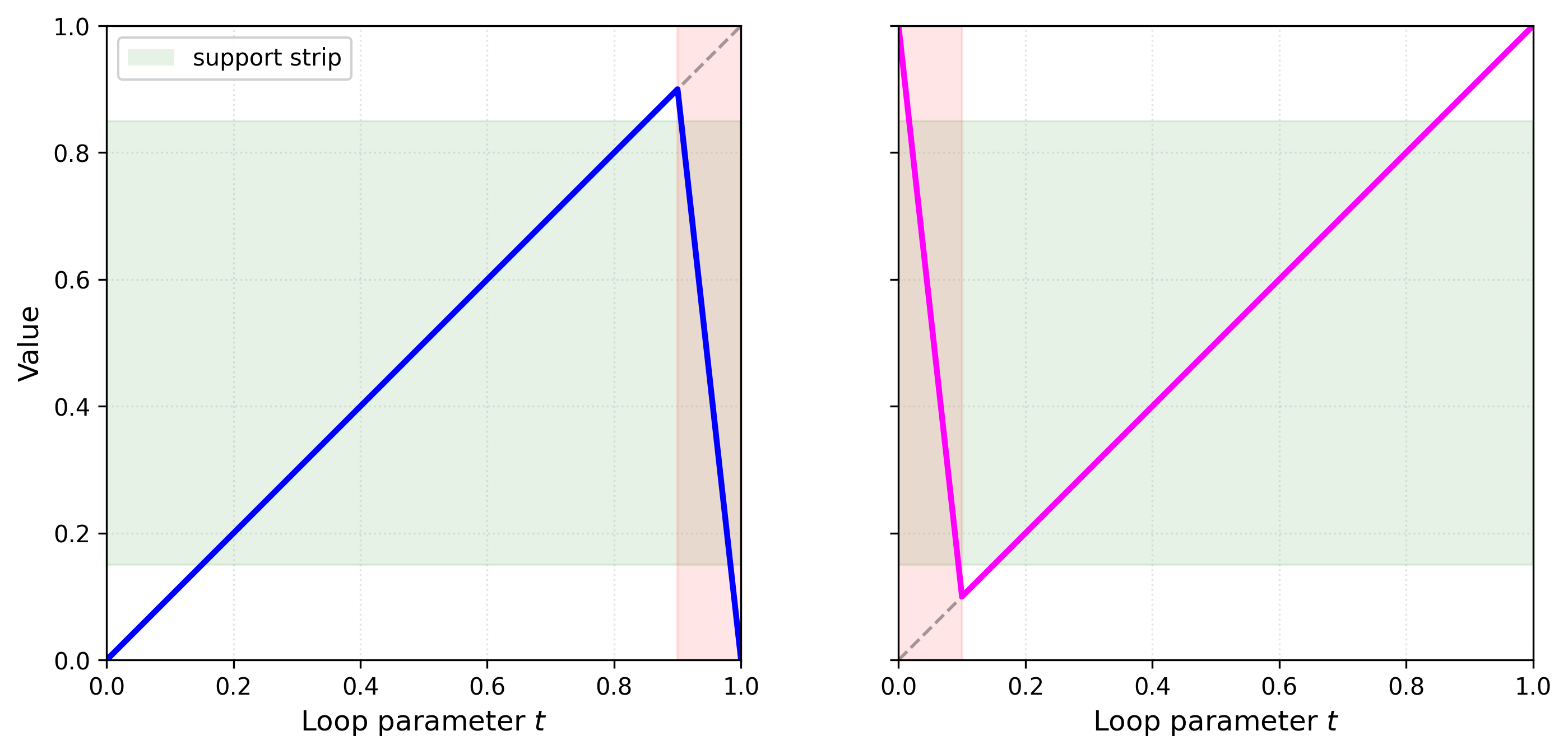}
\caption{Complementary scalar readouts \(\rho^{-}\) (left) and \(\rho^{+}\) (right). 
The dashed line is the identity and the solid curve is the corresponding readout. 
The horizontal shaded strip is the support interval \([\rho,1-\rho]\) of the special hat. 
The vertical shaded strip marks the seam interval modified by the readout: 
\([1-\varepsilon,1]\) on the left and \([0,\varepsilon]\) on the right.}
\label{fig:1d-terminal-readouts}
\end{figure}

\subsection{Four-branch scalar readout for special atoms}
\label{subsec:four-branch-scalar-readout}

We now combine the exact torus controller with the complementary one-dimensional readouts to
recover the scalar factor associated with a special atom.
For $\sigma,\tau\in\{-,+\}$, define the readout
\[
\rho^{\sigma,\tau}:\R^4\to\R^2,
\qquad
\rho^{\sigma,\tau}(z_1,z_2)
:=
\bigl(\rho^\sigma(z_1),\,\rho^\tau(z_2)\bigr).
\]

The next theorem is the two-dimensional scalar counterpart of the one-dimensional terminal
readout mechanism. It recovers the special-atom factor $H(R_n(\cdot))$ directly from the exact
torus-controller state by four terminal branches, corresponding to the two complementary seam
reads in each coordinate.

\begin{theorem}[Special-atom scalar factor]
\label{thm:special-atom-scalar-factor}
Let $H$ be a special atom in the sense of Definition~\ref{def:special-atom}. Then for every $n\ge 1$
and every $(x,y)\in[0,1]^2$, we have
\begin{equation}
\label{eq:H-Rn-min-four}
H(R_n(x,y))
=
\min_{\sigma,\tau\in\{-,+\}}
H\!\left(\rho^{\sigma,\tau}(z_n(x,y))\right),
\end{equation}
where $z_n(x,y)$ is the exact torus-controller state from
Proposition~\ref{prop:exact-torus-controller}.

Consequently, there exist constants $C_0,C_1>0$, depending only on $\bar\varepsilon$, $\varrho$,
and the fixed local complexity of $H$, such that
\[
H(R_n(\cdot))\in \Ups_{C_0,C_1 n}(\ReLU;2,1).
\]
\end{theorem}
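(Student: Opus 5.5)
The plan is to prove the identity \eqref{eq:H-Rn-min-four} pointwise, using Proposition~\ref{prop:exact-torus-controller} to write $z_n(x,y)=(E(s),E(t))$ with $s:=r^n(x)$ and $t:=r^n(y)$. Then $\rho^{\sigma,\tau}(z_n(x,y))=(\rho^\sigma(E(s)),\rho^\tau(E(t)))$, and the claim reduces to the purely static statement
\[
H(s,t)=\min_{\sigma,\tau\in\{-,+\}} H\bigl(\rho^\sigma(E(s)),\rho^\tau(E(t))\bigr),\qquad (s,t)\in[0,1]^2 .
\]
This no longer involves $n$; the dynamics has been absorbed entirely by the exact controller.

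For the static statement I would use two facts. First, every branch value is nonnegative, since $H\ge 0$. Second, at least one branch is exact, that is, it returns $H(s,t)$. For the second fact, Lemma~\ref{lem:1d-terminal-readouts}(i) gives, for each $s\in[0,1]$, a sign $\sigma(s)$ with $\rho^{\sigma(s)}(E(s))=s$: take $-$ if $s\ge\bar\varepsilon$ and $+$ otherwise, noting $[0,\bar\varepsilon)\subset[0,1-\bar\varepsilon]$. Choose $\tau(t)$ in the same way.

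The key step is to show that every other branch is either exact or returns $0$. Here I would use the pinning property of the readouts near the seam, which is the content encoded in Lemma~\ref{lem:1d-terminal-readouts}(ii). Applying (ii) to one-dimensional hats supported in $[\varrho,1-\varrho]$ that are positive on the whole open interval shows the following: whenever $\rho^\sigma(E(s))\neq s$, we have $s\notin[\bar\varepsilon,1-\bar\varepsilon]$, and also $\rho^\sigma(E(s))\notin(\varrho,1-\varrho)$. For the second part, a full-support hat $h$ with $h(s)=0$ must have its minimum equal to $0$, and the other branch is exact, so $h(\rho^\sigma(E(s)))\ge 0=h(s)$. This forces $\rho^\sigma(E(s))$ to lie in the zero set of such a hat, so $\rho^\sigma(E(s))\notin(\varrho,1-\varrho)$.

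If this inference from (ii) feels too indirect, I would instead fall back on the explicit construction of $\rho^\pm$ from \cite{loop}. In that construction the modified seam strip is mapped into $[0,\bar\varepsilon]\cup[1-\bar\varepsilon,1]$, as Figure~\ref{fig:1d-terminal-readouts} depicts.

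Given the key step, consider the two cases. If the first coordinate of a branch is inexact, that coordinate lies outside $(\varrho,1-\varrho)$. Since $\supp H\subset[\varrho,1-\varrho]^2$ and $H$ is continuous, $H$ vanishes at that point, and also $H(s,t)=0$ because $s$ is within $\bar\varepsilon<\varrho$ of the boundary. The same holds for the second coordinate. So each branch value is either $H(s,t)$ or $0$, and in the latter case $H(s,t)=0$ as well. Hence the minimum equals $H(s,t)$. The main obstacle is exactly this step of extracting the pinning property rigorously from the lemma as stated.

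For network realizability, start from $z_n\in\Ups_{C_0,C_1n}(\ReLU;2,4)$, which is Corollary~\ref{cor:controller-network}. The map $w\mapsto\min_{\sigma,\tau}H(\rho^{\sigma,\tau}(w))$ is a fixed $\CPwL$ map $\R^4\to\R$, being built from the $\CPwL$ maps $\rho^\pm$ and $H$ together with a four-fold minimum. The minimum is itself ReLU-realizable via $\min\{a,b\}=a-\ReLU(a-b)$. Remark~\ref{rem:standard-closure}(iv) then gives membership in $\Ups_{C_0',C_1n+L'}$. Finally, absorb $L'$ into $C_1n$ using $n\ge1$.
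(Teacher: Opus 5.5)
Your reduction to the static identity via Proposition~\ref{prop:exact-torus-controller}, your choice of an exact branch, and your network-realizability paragraph are fine. The step you call the key step, however, is false, not merely hard to extract.

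Lemma~\ref{lem:1d-terminal-readouts}(ii) says nothing about an inexact branch. If $\rho^\sigma(E(s))\neq s$, then $s\notin[\bar\varepsilon,1-\bar\varepsilon]$ by (i), so $h(s)=0$. The complementary readout is exact, so the minimum in (ii) is already $0$ whatever the inexact branch returns. Hence ``this forces $\rho^\sigma(E(s))$ into the zero set'' does not follow.

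Worse, no continuous readout can have this pinning property. The map $t\mapsto\rho^-(E(t))$ is continuous and equals $\bar\varepsilon$ at $t=\bar\varepsilon$. Because $E(0)=E(1)$, it also equals $\rho^-(E(1))=1$ at $t=0$. So on $[0,\bar\varepsilon]$ it takes every value in $[\bar\varepsilon,1]\supset(\varrho,1-\varrho)$; the same holds symmetrically for $\rho^+$ on $[1-\bar\varepsilon,1]$. Your fallback claim about the construction in \cite{loop} fails for the same reason.

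Concretely, take $s\in(0,\bar\varepsilon)$ with $\rho^-(E(s))=\tfrac12$, and take $t=\tfrac12$. Both $(-,\tau)$ branches then return $H(\tfrac12,\tfrac12)$, which can be positive, while $H(s,t)=0$. So the claim that each branch value is either $H(s,t)$ or $0$ is wrong.

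The repair is that inexact branches need no control beyond nonnegativity, and you already have every ingredient.
\begin{itemize}
\item If $H(s,t)>0$, then $(s,t)\in[\varrho,1-\varrho]^2\subset[\bar\varepsilon,1-\bar\varepsilon]^2$. By (i), all four branches are exact and equal $H(s,t)$.
\item If $H(s,t)=0$, your exact branch returns $0$ and the other three are $\ge 0$ because $H\ge 0$. So the minimum is $0$.
\end{itemize}
This is essentially the paper's two-case argument, which splits on whether $(u,v)\in[\bar\varepsilon,1-\bar\varepsilon]^2$. Inexact branches may take arbitrary nonnegative values; the minimum is pinned to $0$ by the exact branch, not by the readout.
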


\begin{proof}
Set
\(u=r^n(x)\) and \(v=r^n(y)\),
so that
\(R_n(x,y)=(u,v)\).
By Proposition~\ref{prop:exact-torus-controller}, we have
\(z_n(x,y)=\bigl(E(u),E(v)\bigr)\).
Hence, for every $\sigma,\tau\in\{-,+\}$, we infer
\[
\rho^{\sigma,\tau}(z_n(x,y))
=
\bigl(\rho^\sigma(E(u)),\,\rho^\tau(E(v))\bigr).
\]
Therefore \eqref{eq:H-Rn-min-four} is equivalent to
\begin{equation}
\label{eq:min-four-uv}
H(u,v)=\min_{\sigma,\tau\in\{-,+\}}
H\bigl(\rho^\sigma(E(u)),\,\rho^\tau(E(v))\bigr).
\end{equation}

We prove \eqref{eq:min-four-uv} by a case split.

\medskip
\noindent\emph{Case 1: $(u,v)\in[\bar\varepsilon,1-\bar\varepsilon]^2$.}
In this case both readouts are exact in each coordinate, so
\[
\rho^-(E(u))=u=\rho^+(E(u)),
\qquad
\rho^-(E(v))=v=\rho^+(E(v)).
\]
Thus all four branches coincide with $H(u,v)$, and therefore
\[
\min_{\sigma,\tau\in\{-,+\}}
H\bigl(\rho^\sigma(E(u)),\,\rho^\tau(E(v))\bigr)=H(u,v).
\]

\medskip
\noindent\emph{Case 2: $(u,v)\notin[\bar\varepsilon,1-\bar\varepsilon]^2$.}
Then at least one of the coordinates $u,v$ belongs to
\([0,\bar\varepsilon]\cup[1-\bar\varepsilon,1]\).
Because $\bar\varepsilon<\varrho$ and
\(\supp(H)\subset[\varrho,1-\varrho]^2\),
we have
\(H(u,v)=0\).

Now choose the signs so that the readout is exact in each coordinate:
\[
\sigma_*=
\begin{cases}
+, & u\in[0,\bar\varepsilon],\\
-, & u\in[1-\bar\varepsilon,1],\\
\text{arbitrary}, & u\in[\bar\varepsilon,1-\bar\varepsilon],
\end{cases}
\qquad
\tau_*=
\begin{cases}
+, & v\in[0,\bar\varepsilon],\\
-, & v\in[1-\bar\varepsilon,1],\\
\text{arbitrary}, & v\in[\bar\varepsilon,1-\bar\varepsilon].
\end{cases}
\]
Then we have
\[
\rho^{\sigma_*}(E(u))=u,
\qquad
\rho^{\tau_*}(E(v))=v,
\]
and hence
\[
H\bigl(\rho^{\sigma_*}(E(u)),\,\rho^{\tau_*}(E(v))\bigr)=H(u,v)=0.
\]
Since $H\ge 0$, all four branch values are nonnegative. Therefore
\[
\min_{\sigma,\tau\in\{-,+\}}
H\bigl(\rho^\sigma(E(u)),\,\rho^\tau(E(v))\bigr)=0=H(u,v).
\]
This proves \eqref{eq:min-four-uv}, and therefore \eqref{eq:H-Rn-min-four}.

For the network-realizability statement, note first that the controller state
\((x,y)\mapsto z_n(x,y)\)
belongs to $\Ups_{C_0',C_1' n}(\ReLU;2,4)$ by Corollary~\ref{cor:controller-network}. For each
$\sigma,\tau\in\{-,+\}$, the readout
\(\rho^{\sigma,\tau}:\R^4\to\R^2\)
is fixed and $\CPwL$, and $H:\R^2\to\R$ is a fixed compactly supported $\CPwL$ map. Hence each
branch
\((x,y)\mapsto H\!\left(\rho^{\sigma,\tau}(z_n(x,y))\right)\)
belongs to a class
\(\Ups_{\widetilde C_0,\widetilde C_1 n}(\ReLU;2,1)\)
with constants depending only on $\bar\varepsilon$, $\varrho$, and the fixed local complexity of $H$.

Finally, the minimum of two scalar outputs is realized by a fixed-depth ReLU gadget, 
and hence the minimum of the four branch outputs is obtained by composing this gadget twice.
Running the four branches in parallel changes the width only by a constant factor and adds only
$O(1)$ extra depth. Therefore there exist constants $C_0,C_1>0$, depending only on
$\bar\varepsilon$, $\varrho$, and the fixed local complexity of $H$, such that
\(H(R_n(\cdot))\in \Ups_{C_0,C_1 n}(\ReLU;2,1)\).
\end{proof}

The scalar part of the construction is now complete. The exact torus controller transports the
residual pair forward without approximation, and
Theorem~\ref{thm:special-atom-scalar-factor} shows that the terminal scalar factor associated with
a special atom can be recovered exactly from that controller state. It remains to treat the
finite-dimensional matrix cascade: the selector matrices, the recursive propagation of the vectorized
state, and the reconstruction of the physical function on the support window. This is the content of
the next section.

\section{Recursive realization for special atoms}
\label{sec:recursive-block}

In this section we turn from the scalar controller/readout mechanism to the finite-dimensional
matrix cascade. By Theorem~\ref{thm:special-atom-scalar-factor}, the scalar factor
\(H(R_n(\cdot))\)
associated with a special atom is already under exact control. What remains is to propagate the
vectorized cascade through the dyadic digit pairs and then to glue the resulting squarewise
realization back to a scalar function on the support window.

For the purposes of the present section, it is enough to treat a special atom
\(H:[0,1]^2\to\R\) supported in the reference square. The translated case will be handled later in
Section~\ref{sec:general-seeds} by translation covariance, so no shifted atoms are needed here.
Since \(H\) is supported in
the first unit square, its vectorization is simply
\[
G(u,v):=\Vec(H)(u,v)=H(u,v)\,b_{11},
\qquad (u,v)\in[0,1]^2 ,
\]
where \(b_{11}\in\R^{L_1L_2}\) denotes the standard basis vector corresponding to the patch \((1,1)\). 
Accordingly, Corollary~\ref{cor:iterated-cascade-2d} gives
\begin{equation}
\label{eq:G-n}
G_n(z)=T_{Q_1(z)}T_{Q_2(z)}\cdots T_{Q_n(z)}\,H(R_n(z))\,b_{11},
\qquad z\in[0,1]^2.
\end{equation}
Thus the scalar factor has already been separated off, and the remaining task is to realize the
matrix product by a fixed-depth recursive block.

\subsection{Binary loop selectors and terminal boundary localization}
\label{subsec:selectors-boundary-trap}

Unlike the terminal scalar readout from Section~\ref{sec:torus-controller}, the selector transition
width must depend on the final depth \(n\). Indeed, a point that enters the selector transition set at
an intermediate stage is not expected to remain near the boundary under further iteration; the
dyadic residual map is expanding. What matters is the terminal effect: \(\delta_n\) is chosen so that,
if one coordinate enters the transition set at some stage \(j\le n\), then by time \(n\) the terminal
residual lies in a boundary layer where the special atom already vanishes.

We recall the selector mechanism from \cite{loop}, specialized to the binary setting.
Fix once and for all a parameter
\(0<\bar\delta<1\),
and for each \(n\ge 1\) define
\(\delta_n:=\bar\delta\,\varrho\,2^{-n}\).
We then set
\(J_n:=[0,\delta_n]\cup[\frac12,\frac12+\delta_n]\).
This is the one-dimensional transition set used in the selector stage.

For each \(n\ge 1\), we fix \(\CPwL\) selectors
\(\chi_{0,n},\chi_{1,n}:\R^2\to[0,1]\)
with the following properties:

\begin{enumerate}[label=(\roman*)]
\item 
\(\chi_{0,n}(z)+\chi_{1,n}(z)=1\) for \(z\in\Gamma\);
\item 
\(\chi_{0,n}(E(t))=\chi_{[0,1/2)}(t)\) and \(\chi_{1,n}(E(t))=\chi_{[1/2,1]}(t)\) for every \(t\in[0,1]\setminus J_n\);
\item 
each \(\chi_{q,n}\) admits an exact finite ReLU realization with width and depth bounded
independently of \(n\), while the corresponding weights and biases may be chosen with magnitudes
bounded by \(C\,2^n\), where \(C\) depends only on \(\bar\delta\) and \(\varrho\).
\end{enumerate}

The next lemma is the key geometric fact that absorbs selector ambiguity. The one-dimensional
transition set \(J_n\) gives rise in the square to thin vertical and horizontal transition strips, since
ambiguity occurs whenever at least one coordinate is in transition. If a residual orbit enters these
strips at any stage, then the terminal residual is already forced outside the support box
\([\varrho,1-\varrho]^2\) of the special atom. Figure~\ref{fig:square} illustrates this geometry and a
sample residual orbit.

\begin{figure}[ht]
\centering
\includegraphics[width=.45\textwidth]{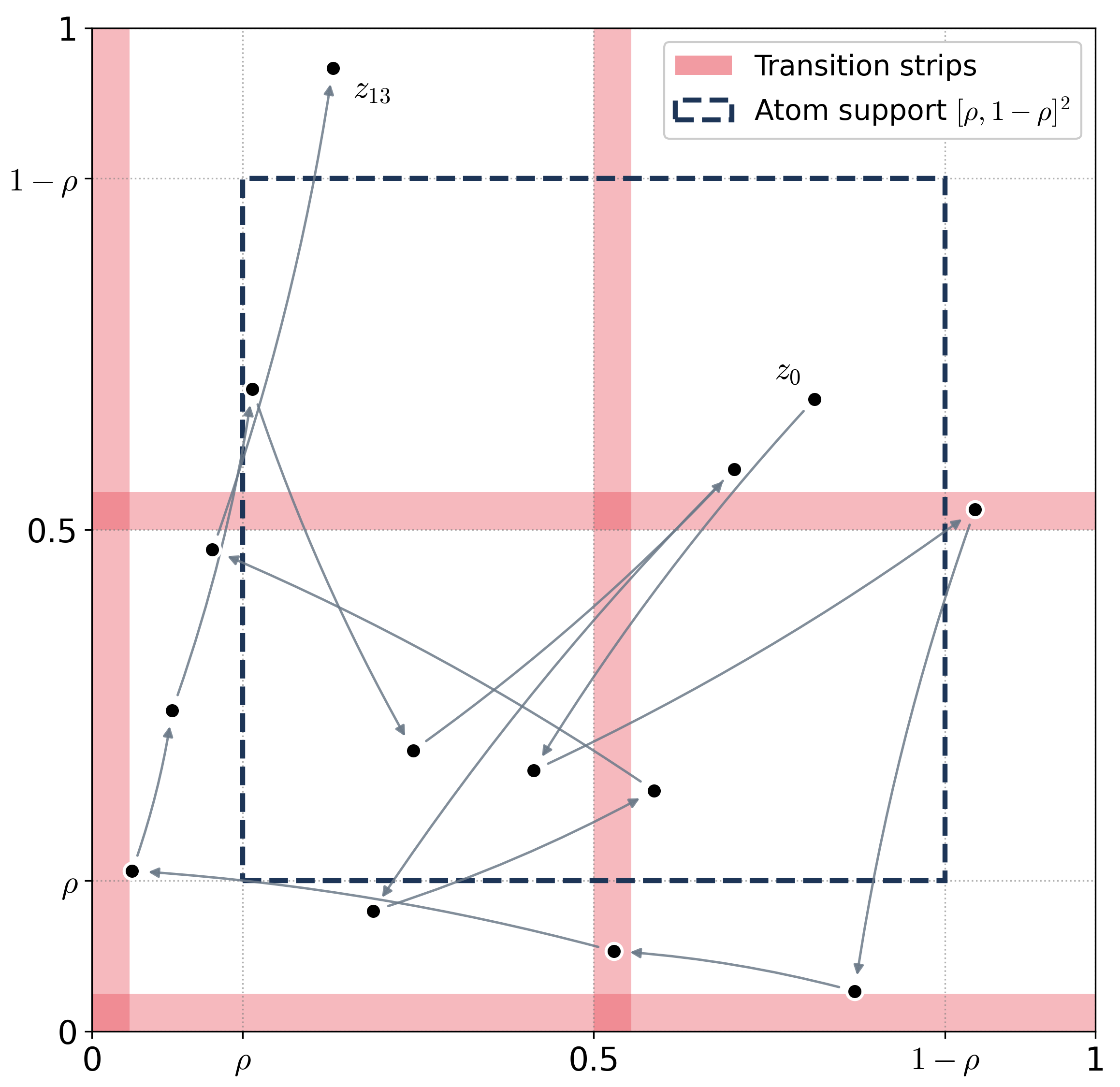}
\caption{Transition strips, support box, and a sample residual orbit.}
\label{fig:square}
\end{figure}

\begin{lemma}[Boundary localization]
\label{lem:bad-stage-kills-H}
Assume that for some \(j\in\{1,\dots,n\}\) one has
\[
r^{j-1}(x)\in J_n
\qquad\text{or}\qquad
r^{j-1}(y)\in J_n.
\]
Then we have
\[
H(R_n(x,y))=0.
\]
\end{lemma}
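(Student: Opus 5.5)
By symmetry in \(x\) and \(y\), it suffices to treat the case \(t:=r^{j-1}(x)\in J_n\). The goal is to show that \(u:=r^n(x)\) lies in \([0,\varrho)\). Since \(\supp H\subset[\varrho,1-\varrho]^2\), this forces \(R_n(x,y)=(u,r^n(y))\notin[\varrho,1-\varrho]^2\), and therefore \(H(R_n(x,y))=0\). The whole argument is thus a one-dimensional statement about the doubling map: a point starting in the transition set \(J_n\) stays close to \(0\) for \(n-j+1\) further iterations.

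The main step is a forward orbit estimate. First, by the definition of \(r\), one step reduces \(J_n\) to a single small interval at the origin.
\begin{itemize}
\item If \(t\in[0,\delta_n]\), then \(2t<1\), so \(r(t)=2t\in[0,2\delta_n]\).
\item If \(t\in[\tfrac12,\tfrac12+\delta_n]\), then \(Q_1(t)=1\) and \(r(t)=2t-1\in[0,2\delta_n]\). Here \(\delta_n<\tfrac12\), so \(t<1\) and the endpoint convention \(r(1)=1\) is not triggered.
\end{itemize}

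Next, by induction on \(k\), I will show that \(r^{k}(s)=2^{k}s\) whenever \(s\in[0,\delta_n 2^{m}]\) and \(2^{k+m}\delta_n<1\). Indeed, \(2^i s<1\) at every intermediate stage, so no digit \(1\) is ever subtracted and the convention at \(1\) is never used. Combining the two steps with \(s=r(t)\), \(m=1\) and \(k=n-j\) gives
\[
r^{n}(x)=r^{n-j}\bigl(r(t)\bigr)\in\bigl[0,\,2^{n-j+1}\delta_n\bigr].
\]
The hypothesis \(2^{k+m}\delta_n<1\) holds because \(k+m=n-j+1\le n\) and \(2^n\delta_n=\bar\delta\varrho<1\).

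Finally, I plug in the choice of \(\delta_n\). Since \(j\ge1\), we get \(2^{n-j+1}\delta_n\le 2^n\delta_n=\bar\delta\varrho<\varrho\), using \(\bar\delta<1\). Hence \(u\in[0,\varrho)\), which lies outside \([\varrho,1-\varrho]\), and \(H(R_n(x,y))=0\).

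I expect the only delicate point to be the bookkeeping of the doubling map near the seam. One has to confirm that the interval around \(\tfrac12\) maps to a neighbourhood of \(0\) and not of \(1\), and that the orbit never reaches \(t=1\), where the convention \(r(1)=1\) would apply. Both checks are covered above by the strict bounds \(2^{i}s<1\), which follow from \(\bar\delta<1\) and \(\varrho<\tfrac12\).
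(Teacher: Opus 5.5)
Your proof is correct and follows the same route as the paper's proof: one step maps $J_n$ into $[0,2\delta_n]$, forward doubling keeps the orbit in $[0,2^{n-j+1}\delta_n]$, and $2^{n-j+1}\delta_n\le\bar\delta\varrho<\varrho$ places $r^n(x)$ outside the support of $H$. Your explicit check that $2^{n-j+1}\delta_n<1$ ensures every intermediate iterate lies in $[0,\tfrac12)$ and so carries digit $0$; this makes rigorous what the paper only asserts through its ``one-sided from the right'' remark.
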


\begin{proof}
By symmetry, it suffices to treat only the \(x\)-coordinate.
So assume from now on that
\(r^{j-1}(x)\in J_n=[0,\delta_n]\cup[\frac12,\frac12+\delta_n]\).
If \(r^{j-1}(x)\in[0,\delta_n]\), then the dyadic digit is \(0\), and therefore
\[
r^j(x)=2r^{j-1}(x)\in[0,2\delta_n].
\]
If \(r^{j-1}(x)\in[\frac12,\frac12+\delta_n]\), then the dyadic digit is \(1\), and we now have
\[
r^j(x)=2r^{j-1}(x)-1\in[0,2\delta_n].
\]
Thus in all cases, we conclude
\(r^j(x)\in[0,2\delta_n]\).

We now iterate forward. Since the residual map is one-sided from the right at both breakpoints
\(0\) and \(\frac12\), each further iterate preserves the left boundary layer and doubles its width.
Hence
\[
r^n(x)\in[0,2^{\,n-j+1}\delta_n].
\]
Using
\(\delta_n=\bar\delta\,\varrho\,2^{-n}\),
we obtain
\[
2^{\,n-j+1}\delta_n
=
2^{\,n-j+1}\bar\delta\,\varrho\,2^{-n}
=
\bar\delta\,\varrho\,2^{1-j}
\le \bar\delta\,\varrho
< \varrho.
\]
Therefore we infer
\(r^n(x)\in[0,\varrho)\),
and in particular,
\[
R_n(x,y)=\bigl(r^n(x),r^n(y)\bigr)\notin[\varrho,1-\varrho]^2.
\]
Since
\(\supp H\subset[\varrho,1-\varrho]^2\),
it is immediate that
\(H(R_n(x,y))=0\).
\end{proof}

\subsection{Modified matrix selectors}
\label{subsec:modified-matrices}

For each stage \(j=1,\dots,n\), define the modified matrix field
\[
\widehat M_j(x,y)
:=
\sum_{q_1,q_2\in\{0,1\}}
\chi_{q_1,n}\!\bigl(z^{(1)}_{j-1}(x,y)\bigr)\,
\chi_{q_2,n}\!\bigl(z^{(2)}_{j-1}(x,y)\bigr)\,
T_{(q_1,q_2)},
\]
where
\[
z_{j-1}(x,y)=\bigl(z^{(1)}_{j-1}(x,y),z^{(2)}_{j-1}(x,y)\bigr)
=
\bigl(E(r^{j-1}(x)),E(r^{j-1}(y))\bigr)
\]
is the exact torus-controller state from Proposition~\ref{prop:exact-torus-controller}.

Thus \(\widehat M_j(x,y)\) is a \(\CPwL\) convex combination of the four digit-pair transition
matrices. If neither coordinate has entered the transition set at the preceding stages, we shall call
the orbit {\em good}; in that case exactly one digit pair is active and \(\widehat M_j\) coincides with the
exact transition matrix. If some coordinate has entered the transition set earlier, we shall call the
orbit {\em bad}; in that case the scalar factor has already vanished by
Lemma~\ref{lem:bad-stage-kills-H}.

It is important that \(\widehat M_j\) appears only as a bookkeeping device in the identity below.
The actual recursive network does \emph{not} compute the products
\(\chi_{q_1,n}\chi_{q_2,n}\)
as separate bilinear functions. Instead, the branch states are gated successively by nested
applications of the product gadget from \S\ref{subsec:product-gadget}.

\begin{lemma}[Modified cascade identity]
\label{lem:2d-modified-cascade}
For a coordinate index \(\ell\in\{1,\dots,L_1L_2\}\), define
\[
g_{n,\ell}(x,y):=e_\ell^\top G_n(x,y),
\qquad (x,y)\in[0,1]^2.
\]
Then
\begin{equation}
\label{eq:mod-cas}
g_{n,\ell}(x,y)
=
H(R_n(x,y))\,
e_\ell^\top \widehat M_1(x,y)\cdots \widehat M_n(x,y)\,b_{11},
\qquad (x,y)\in[0,1]^2.
\end{equation}
\end{lemma}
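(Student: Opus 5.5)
Starting from \eqref{eq:G-n}, we have
\[
g_{n,\ell}(z)=H(R_n(z))\,e_\ell^\top T_{Q_1(z)}\cdots T_{Q_n(z)}\,b_{11}.
\]
So \eqref{eq:mod-cas} follows once we show that, at every point $z=(x,y)\in[0,1]^2$, either $H(R_n(z))=0$, or $\widehat M_j(z)=T_{Q_j(z)}$ for every $j=1,\dots,n$. If $H(R_n(z))=0$, both sides of \eqref{eq:mod-cas} vanish, whatever the matrix products are. The $\widehat M_j$ are finite convex combinations of fixed matrices, so the right-hand side is well defined everywhere. Hence the proof reduces to a pointwise dichotomy between good and bad orbits.

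\emph{Bad case.} Suppose there is some $j\in\{1,\dots,n\}$ with $r^{j-1}(x)\in J_n$ or $r^{j-1}(y)\in J_n$. Lemma~\ref{lem:bad-stage-kills-H} then gives $H(R_n(z))=0$. Therefore $g_{n,\ell}(z)=0$ by \eqref{eq:G-n}, and the right-hand side of \eqref{eq:mod-cas} is also $0$.

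\emph{Good case.} Suppose that for every $j\le n$ both $r^{j-1}(x)$ and $r^{j-1}(y)$ lie in $[0,1]\setminus J_n$. By Proposition~\ref{prop:exact-torus-controller}, $z^{(1)}_{j-1}=E(r^{j-1}(x))$ and $z^{(2)}_{j-1}=E(r^{j-1}(y))$. Selector property (ii) then gives
\[
\chi_{q_1,n}(z^{(1)}_{j-1})=\chi_{[0,1/2)}(r^{j-1}(x))\ \text{ or }\ \chi_{[1/2,1]}(r^{j-1}(x)),
\]
according as $q_1=0$ or $q_1=1$. This equals $1$ exactly when $q_1=Q_1(r^{j-1}(x))$ and $0$ otherwise, and the same holds for the $y$-coordinate. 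Here I will check the endpoint convention: $Q_1(1)=1$ matches $\chi_{[1/2,1]}(1)=1$, and $Q_1(t)=\lfloor 2t\rfloor$ matches the half-open indicator on $[0,1)$. So in the sum defining $\widehat M_j(z)$, only the term with $(q_1,q_2)=Q_j(z)$ survives, and its coefficient is $1$. Hence $\widehat M_j(z)=T_{Q_j(z)}$ for all $j$, and \eqref{eq:mod-cas} coincides with \eqref{eq:G-n}.

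I do not expect a genuine obstacle; the points needing care are bookkeeping. First, the good/bad dichotomy must be exhaustive, which it is by its definition (negation of the hypothesis of Lemma~\ref{lem:bad-stage-kills-H}). Second, property (ii) is stated for $t\in[0,1]\setminus J_n$, which is exactly the good-case hypothesis. Third, $e_\ell^\top$ passes through all of this linearly. Note also that the selectors are not exact at $t=0\in J_n$ or near $t=1/2$. These points are covered by the bad case, so no separate seam argument is needed.
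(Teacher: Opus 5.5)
Your proposal is correct and follows the same route as the paper. You start from \eqref{eq:G-n} and split into good orbits, where selector property (ii) forces $\widehat M_j=T_{Q_j}$ for all $j$, and bad orbits, where Lemma~\ref{lem:bad-stage-kills-H} gives $H(R_n)=0$ so both sides vanish; your explicit check that the indicators $\chi_{[0,1/2)}$ and $\chi_{[1/2,1]}$ match the digit convention $Q_1(1)=1$ is a small detail the paper leaves implicit.
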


\begin{proof}
Corollary~\ref{cor:iterated-cascade-2d} gives
\[
G_n(x,y)=T_{Q_1(x,y)}T_{Q_2(x,y)}\cdots T_{Q_n(x,y)}\,G(R_n(x,y)).
\]
Since \(G=H\,b_{11}\), this becomes
\[
g_{n,\ell}(x,y)
=
H(R_n(x,y))\,
e_\ell^\top T_{Q_1(x,y)}T_{Q_2(x,y)}\cdots T_{Q_n(x,y)}\,b_{11}.
\]

We distinguish two cases.

\noindent\emph{Case 1: Good orbit.}
Assume that \(r^{j-1}(x)\notin J_n\) and \(r^{j-1}(y)\notin J_n\) for every \(j=1,\dots,n\).
Then, by the defining property of the selectors, we have
\begin{align*}
\chi_{0,n}\!\bigl(z^{(1)}_{j-1}(x,y)\bigr)
&=
\chi_{[0,1/2)}\!\bigl(r^{j-1}(x)\bigr),
&
\chi_{1,n}\!\bigl(z^{(1)}_{j-1}(x,y)\bigr)
&=
\chi_{[1/2,1]}\!\bigl(r^{j-1}(x)\bigr),
\\
\chi_{0,n}\!\bigl(z^{(2)}_{j-1}(x,y)\bigr)
&=
\chi_{[0,1/2)}\!\bigl(r^{j-1}(y)\bigr),
&
\chi_{1,n}\!\bigl(z^{(2)}_{j-1}(x,y)\bigr)
&=
\chi_{[1/2,1]}\!\bigl(r^{j-1}(y)\bigr).
\end{align*}
Since \(r^{j-1}(x),r^{j-1}(y)\notin J_n\) on a good orbit, and \(J_n\) contains the overlap point
\(\frac12\), exactly one selector is active in each coordinate. Hence exactly one pair \((q_1,q_2)\)
is active, namely \((q_1,q_2)=Q_j(x,y)\),
and therefore
\[
\widehat M_j(x,y)=T_{Q_j(x,y)},
\qquad j=1,\dots,n.
\]
Substituting this into the exact cascade formula gives the claim.

\noindent\emph{Case 2: Bad orbit.}
Assume that \(r^{j-1}(x)\in J_n\) or \(r^{j-1}(y)\in J_n\) for some \(j\in\{1,\dots,n\}\).
Then Lemma~\ref{lem:bad-stage-kills-H} gives
\(H(R_n(x,y))=0\).
Therefore the right hand side of \eqref{eq:mod-cas} vanishes,
and so does $g_{n,\ell}$ due to \eqref{eq:G-n}.
\end{proof}

\subsection{Recursive realization on the unit square}
\label{subsec:unit-square-recursion}

We now turn the modified cascade identity into an explicit recursive network construction.

Fix a coordinate index \(\ell\in\{1,\dots,L_1L_2\}\).
We initialize the backward state by
\[
\Phi_0(x,y):=H(R_n(x,y))\,e_\ell.
\]
Further, define
\[
B:=\max_{q\in\{0,1\}^2}\|T_q^\top\|_{\infty\to\infty},
\qquad
M_H:=\|H\|_{L^\infty([0,1]^2)},
\qquad
a_n:=\max\{1,B^nM_H\}.
\]
For \(j=1,\dots,n\), define recursively
\[
\widehat\Phi_j(x,y)
:=
\sum_{q_1,q_2\in\{0,1\}}
\Pi_{a_n}\!\left(
\chi_{q_2,n}(z^{(2)}_{j-1}(x,y)),
\,
\Pi_{a_n}\!\left(
\chi_{q_1,n}(z^{(1)}_{j-1}(x,y)),
\,
T_{(q_1,q_2)}^\top \widehat\Phi_{j-1}(x,y)
\right)\right),
\]
with
\(\widehat\Phi_0(x,y):=\Phi_0(x,y)\).

Here \(\Phi_0\) is obtained by composing the scalar realization of \(H(R_n(\cdot))\) from
Theorem~\ref{thm:special-atom-scalar-factor} with the fixed linear embedding
\(s\mapsto s\,e_\ell\), while each map \(y\mapsto T_{(q_1,q_2)}^\top y\) is a fixed affine map on
\(\R^{L_1L_2}\). Thus the only nontrivial part of the recursive construction is the exact gating of
the branch contributions. The point of the nested definition above is that no exact multiplication of
two variable selector outputs is ever required: on a good orbit the selectors are already binary, so
exactly one digit pair survives; on a bad orbit the initial state is already zero, and the identity
\(\Pi_{a_n}(\lambda,0)=0\)
annihilates all subsequent branches automatically.

\begin{theorem}[Recursive realization on the unit square]
\label{thm:2d-special-atom-unit-square}
For every \(\ell\in\{1,\dots,L_1L_2\}\) and every \((x,y)\in[0,1]^2\), we have
\[
e_\ell^\top \Vec(V^nH)(x,y)=b_{11}^\top \widehat\Phi_n(x,y).
\]
Consequently, there exist constants \(C_0,C_1>0\), depending only on the support window, the mask
support, and the fixed local complexity of \(H\), such that
\[
\Vec(V^nH)\in \Ups_{C_0,C_1 n}(\ReLU;2,L_1L_2),
\qquad n\ge 1.
\]
Moreover, the corresponding network weights and biases may be chosen with magnitudes bounded by
\(C_2\Lambda^n\) for suitable constants \(C_2,\Lambda>0\) depending only on the support window,
the mask support, and \(H\).
\end{theorem}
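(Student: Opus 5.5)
We would prove the identity first and then read off the network bounds from the recursion. Transposing the modified cascade identity of Lemma~\ref{lem:2d-modified-cascade} gives
\[
g_{n,\ell}(x,y)=H(R_n(x,y))\,b_{11}^\top \widehat M_n^\top\cdots\widehat M_1^\top e_\ell .
\]
The recursion for $\widehat\Phi_j$ applies, at stage $j$, the transposed transition matrices selected by $z_{j-1}$. So the target claim is that $\widehat\Phi_n$ equals $H(R_n)\,T_{Q_n}^\top\cdots T_{Q_1}^\top e_\ell$ on good orbits and equals $0$ on bad orbits. Here good and bad are defined with respect to the terminal transition set $J_n$, as in \S\ref{subsec:modified-matrices}. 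Pairing with $b_{11}$ and invoking \eqref{eq:G-n} then yields the identity.

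\emph{Bad orbits.} Suppose some $r^{j-1}(x)$ or $r^{j-1}(y)$ lies in $J_n$. Lemma~\ref{lem:bad-stage-kills-H} gives $H(R_n)=0$, so $\widehat\Phi_0=0$. We then induct on $j$ using three facts: $T^\top 0=0$, $\Pi_{a_n}(\lambda,0)=0$ for every $\lambda\in[0,1]$ (Lemma~\ref{lem:product-gadget}), and a sum of zeros is zero. This gives $\widehat\Phi_j=0$ for all $j$. Both sides of the claimed identity then vanish, the left side by \eqref{eq:G-n}.

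\emph{Good orbits.} Selector property (ii) makes each $\chi_{q,n}(z^{(i)}_{j-1})$ exactly $0$ or $1$. In each coordinate exactly one value equals $1$, namely the one for the digit $Q_j(x,y)$. We prove by induction the statement $\widehat\Phi_j=H(R_n)\,T_{Q_j}^\top\widehat\Phi_{j-1}/H(R_n)$ type recursion, i.e.
\[
\widehat\Phi_j=H(R_n)\,T_{Q_j}^\top\cdots T_{Q_1}^\top e_\ell ,
\]
together with the bound $\|\widehat\Phi_j\|_\infty\le B^jM_H\le a_n$. Inside each branch, $\|T_q^\top\widehat\Phi_{j-1}\|_\infty\le B\cdot B^{j-1}M_H\le a_n$. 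The inner gadget therefore returns either this vector or $0$, and both lie in $[-a_n,a_n]^{L_1L_2}$. Hence the outer gadget again acts exactly via $\Pi(1,y)=y$ and $\Pi(0,y)=0$. Only the branch $(q_1,q_2)=Q_j$ survives, which gives the recursion.

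The step we expect to need the most care is this range bookkeeping: the gadget identities hold only on $[-a_n,a_n]^N$, so the inductive sup-norm bound must be carried along. This is the reason $a_n$ grows like $B^n$.

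\emph{Network realization.} There is a sequencing issue. $\widehat\Phi_0$ requires $H(R_n)$, which becomes available only after the full forward pass to $z_n$. The backward recursion, however, consumes $z_0,\dots,z_{n-1}$ in increasing order. We resolve this with two passes. The first pass computes $z_n$ via Corollary~\ref{cor:controller-network} and then $H(R_n)e_\ell$ via Theorem~\ref{thm:special-atom-scalar-factor}, in depth $O(n)$. Throughout this pass we carry the input $(x,y)\in[0,1]^2$ unchanged through identity channels $t=\ReLU(t)$. The second pass recomputes $z_0=\mathcal E(x,y)$ and then runs $n$ stages. Each stage advances the controller by one $\mathcal F$ block, evaluates the four selector values, and applies the four nested gadget pairs and the sum. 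Every stage has width and depth bounded independently of $n$, by selector property (iii) and Lemma~\ref{lem:product-gadget}.

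Running this for all $\ell$ in parallel, and taking the final linear readout $b_{11}^\top$, gives $\Vec(V^nH)\in\Ups_{C_0,C_1n}(\ReLU;2,L_1L_2)$, with width multiplied only by $L_1L_2$. Finally, the weights are bounded as follows. The selectors have weights of size $O(2^n)$, the gadgets have biases of size $a_n=O(B^n)$, and all other blocks are fixed. So weights of magnitude $C_2\Lambda^n$ with $\Lambda=\max\{2,B,1\}$ suffice.
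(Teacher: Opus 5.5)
Your proof is correct and follows the paper's argument. It uses the same global good/bad orbit dichotomy: on bad orbits, $\Pi_{a_n}(\lambda,0)=0$ propagates the zero initial state, and on good orbits, the inductive range bound keeps every gadget input admissible. Your explicit two-pass schedule spells out a sequencing step the paper leaves implicit: you first form $H(R_n)e_\ell$ while carrying $(x,y)$, then recompute $z_0,\dots,z_{n-1}$ alongside the backward recursion.

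One small point, shared with the paper: the bound $B^jM_H\le a_n=\max\{1,B^nM_H\}$ can fail when $B<1<M_H$. Setting $a_n:=\max\{1,B\}^n\max\{1,M_H\}$ repairs this and does not affect the $C_2\Lambda^n$ weight bound.
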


\begin{proof}
We argue by a global good/bad orbit dichotomy.

\noindent\emph{Good orbit.}
Assume that neither coordinate enters the transition set at any stage, namely
\[
r^{j-1}(x)\notin J_n
\quad\text{and}\quad
r^{j-1}(y)\notin J_n
\qquad\text{for all }j=1,\dots,n.
\]
Then the selectors recover the exact dyadic digits in each coordinate, so at every stage exactly one
pair \((q_1,q_2)\) is active, namely
\((q_1,q_2)=Q_j(x,y)\).
Hence the recursive block reduces to the exact adjoint update
\[
\Phi_j(x,y):=T_{Q_j(x,y)}^\top \Phi_{j-1}(x,y),
\qquad j=1,\dots,n,
\]
with \(\Phi_0(x,y)=H(R_n(x,y))e_\ell\).
Now Lemma~\ref{lem:2d-modified-cascade} yields
\[
e_\ell^\top \Vec(V^nH)(x,y)=b_{11}^\top \Phi_n(x,y).
\]
Moreover, we have
\[
\|\Phi_j(x,y)\|_\infty
\le B^jM_H
\le a_n,
\qquad j=0,\dots,n.
\]
Thus every input to the product gadget lies in the admissible range, and the identities from
Lemma~\ref{lem:product-gadget} apply. Consequently, we have
\(\widehat\Phi_j(x,y)=\Phi_j(x,y)\) for \(j=0,\dots,n\), and thus
\[
e_\ell^\top \Vec(V^nH)(x,y)=b_{11}^\top \widehat\Phi_n(x,y).
\]

\noindent\emph{Bad orbit.}
Assume that at least one coordinate enters \(J_n\) at some stage. Then
Lemma~\ref{lem:bad-stage-kills-H} gives
\(H(R_n(x,y))=0\), and so
\[
\widehat\Phi_0(x,y)=\Phi_0(x,y)=0.
\]
As
\(\Pi_{a_n}(\cdot,0)=0\),
every term in the recursive definition of \(\widehat\Phi_j(x,y)\) vanishes whenever
\(\widehat\Phi_{j-1}(x,y)=0\). By induction, we infer
\(\widehat\Phi_j(x,y)=0\) for all \(j=0,\dots,n\).
On the other hand, since
\[
G(R_n(x,y))=H(R_n(x,y))\,b_{11}=0,
\]
the exact cascade also vanishes:
\(\Vec(V^nH)(x,y)=0\).
Thus we conclude that
\[
e_\ell^\top \Vec(V^nH)(x,y)=b_{11}^\top \widehat\Phi_n(x,y)=0
\]
on every bad orbit as well.
This proves the scalar identity for each fixed \(\ell\). Since the number of coordinates is fixed, we
may run the finitely many coordinate constructions in parallel. Thus we conclude
\(\Vec(V^nH)\in \Ups_{C_0,C_1 n}(\ReLU;2,L_1L_2)\).

Finally, the only \(n\)-dependent affine coefficients in the construction come from the selectors and
from the gate scale \(a_n\). The selector slopes are of order
\(\delta_n^{-1}\asymp 2^n\),
while
\(a_n\le \max\{1,B^nM_H\}\).
Hence all weights and biases may be bounded in magnitude by \(C_2\Lambda^n\) for suitable constants
\(C_2,\Lambda>0\).
\end{proof}

\subsection{Clamped gluing and the special-atom theorem}
\label{subsec:clamped-gluing}

We now pass from the vectorized realization on the reference square to the physical function on the
support window. Theorem~\ref{thm:2d-special-atom-unit-square} realizes \(\Vec(V^nH)\) on
\([0,1]^2\), encoding all unit-square patches of \(V^nH\) simultaneously. To recover the scalar
function \(V^nH\) on \(\R^2\), one must glue these compatible patches across the preserved support
window. The next lemma provides this fixed-overhead clamped gluing step, and the resulting theorem
yields the exact realization of \(V^nH\) itself.

\begin{lemma}[Exact clamped gluing on a rectangular window]
\label{lem:exact-clamped-gluing}
Let \(f:\R^2\to\R\) be supported in \([0,L_1]\times[0,L_2]\), and for integers
\(1\le a\le L_1\), \(1\le b\le L_2\), let
\[
f_{a,b}(u,v):=f(u+a-1,v+b-1),
\qquad (u,v)\in[0,1]^2,
\]
denote its unit-square pieces. Assume that each \(f_{a,b}\) belongs to
\(\Ups_{W,L}(\ReLU;2,1)\),
and that the usual edge compatibilities hold:
\[
f_{a,b}(1,v)=f_{a+1,b}(0,v)
\quad (1\le a<L_1),
\]
\[
f_{a,b}(u,1)=f_{a,b+1}(u,0)
\quad (1\le b<L_2).
\]
Then there exist constants \(C_0,C_1>0\), depending only on \(L_1,L_2\), such that
\(f\in \Ups_{C_0W,\;L+C_1}(\ReLU;2,1)\).
\end{lemma}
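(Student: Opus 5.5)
The plan is to write \(f\) as a single expression in terms of clamped inputs. For integers \(1\le a\le L_1\), \(1\le b\le L_2\), let
\[
\kappa(t):=\min\{\max\{t,0\},1\}=\ReLU(t)-\ReLU(t-1),
\]
and define the clamped local coordinates
\[
u_a(x):=\kappa(x-a+1),\qquad v_b(y):=\kappa(y-b+1).
\]
The natural candidate is a one-dimensional telescoping identity applied in each coordinate. For a continuous function \(\phi\) on \([0,L]\) built from pieces \(\phi_a(s)=\phi(s+a-1)\) with \(\phi_a(1)=\phi_{a+1}(0)\), the first step is to verify
\[
\phi(x)=\sum_{a=1}^{L}\phi_a(u_a(x))-\sum_{a=1}^{L-1}\phi_a(1)
\qquad\text{for } x\in[0,L].
\]
To check it, fix \(x\in[a_0-1,a_0]\). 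Then \(u_a(x)=1\) for \(a<a_0\), and \(u_a(x)=0\) for \(a>a_0\). Each term with \(a<a_0\) cancels against the subtracted constant \(\phi_a(1)\). Each term with \(a>a_0\) gives \(\phi_a(0)=\phi_{a-1}(1)\), which cancels the constant indexed by \(a-1\ge a_0\). What remains is \(\phi_{a_0}(x-a_0+1)=\phi(x)\).

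In two dimensions I would use the tensorized inclusion–exclusion version:
\[
f(x,y)=\sum_{a,b}\epsilon\text{-combination of } f_{a,b}(u_a(x),v_b(y)),\ f_{a,b}(u_a(x),1),\ f_{a,b}(1,v_b(y)),\ f_{a,b}(1,1).
\]
This is obtained by applying the 1D identity in \(x\) to each fixed-\(y\) slice, and then again in \(y\). The edge compatibilities are exactly what make the 1D hypotheses hold slice by slice.

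The cleaner route, which I will take, is to apply the 1D identity twice formally. This gives
\[
f(x,y)=\sum_{a,b}\bigl[F_{a,b}(x,y)-F_{a,b}(1\text{ in }x)-F_{a,b}(1\text{ in }y)+F_{a,b}(1,1)\bigr]
\]
with appropriate index ranges, where \(F_{a,b}(x,y)=f_{a,b}(u_a(x),v_b(y))\). Each correction term is itself of the form \(f_{a,b}\) evaluated at a clamped input with one coordinate frozen at \(1\). It can therefore be realized by the same network for \(f_{a,b}\) with input \((1,v_b(y))\) or \((u_a(x),1)\), or it is a constant.

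Realizability then follows from the closure properties in Remark~\ref{rem:standard-closure}. The maps \((x,y)\mapsto(u_a(x),v_b(y))\) form a fixed depth-\(2\), width-\(4\) ReLU input layer. Composing with the given \(f_{a,b}\in\Ups_{W,L}(\ReLU;2,1)\) gives depth \(L+O(1)\). Running at most \(3L_1L_2\) such copies in parallel, plus constants, gives width at most \(C_0W\). The final signed sum is one affine output layer.

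The identity must also hold outside \([0,L_1]\times[0,L_2]\), where \(f=0\). For \(x<0\) all \(u_a(x)=0\), so the clamped formula equals its value at \(x=0\), namely \(f(0,y)\). This vanishes, because \(f\) is continuous with support in the window, so \(f(0,\cdot)=0\). The same holds at \(x=L_1\), \(y=0\), and \(y=L_2\).

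The main obstacle is this boundary bookkeeping. I must make sure the telescoping identity is exact on the closed window, including the corners, and that the frozen-coordinate correction terms are consistent with both families of edge compatibilities. It is also essential that the vanishing of \(f\) on \(\partial([0,L_1]\times[0,L_2])\), which follows from continuity and the support hypothesis, forces the clamped extension to be zero outside the window.
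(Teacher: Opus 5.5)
Your proposal is correct and is essentially the paper's proof. The paper also applies the one-dimensional clamped gluing identity, with the same ramps $\sigma_k=u_k$ (cited from the one-dimensional loop-controller theory), first in $x$ row by row and then in $y$. Your subtracted constants $\phi_a(1)$ for $a<L$ are the paper's $\phi_{a+1}(0)$ rewritten via the edge compatibilities, and your flat inclusion--exclusion sum is the expansion of the paper's nested formula. Your explicit use of the continuity of $f$, to get vanishing on the boundary of the window and hence of the clamped extension outside it, is what the paper relies on implicitly when it says the support condition gives the outer boundary vanishing.
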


\begin{proof}
By \cite[Lemma~3.13]{loop}, if \(g_1,\dots,g_m:[0,1]\to\R\) satisfy
\[
g_1(0)=0,\qquad g_m(1)=0,\qquad g_k(1)=g_{k+1}(0)\quad (k=1,\dots,m-1),
\]
then the function
\[
\mathcal G[g_1,\dots,g_m](t)
:=
g_1(\sigma_1(t))
+
\sum_{k=2}^m\bigl(g_k(\sigma_k(t))-g_k(0)\bigr)
\]
agrees with \(g_{k_0}(t-k_0+1)\) on \([k_0-1,k_0]\) and vanishes outside \([0,m]\).
Here \(\sigma_k(t):=\ReLU(t-k+1)-\ReLU(t-k)\) is the standard ramp.

We apply this lemma twice.
First, fix \(b\in\{1,\dots,L_2\}\) and \(v\in[0,1]\). Apply the one-dimensional gluing lemma in the
\(x\)-variable to the compatible family
\(u\mapsto f_{1,b}(u,v),\ \dots,\ u\mapsto f_{L_1,b}(u,v)\).
The horizontal edge compatibilities
\(f_{a,b}(1,v)=f_{a+1,b}(0,v)\)
show that the hypotheses are satisfied, and the support condition on \(f\) gives the required boundary
vanishing at the outer edges. Hence the row-glued function
\[
G_b(x,v)
:=
f_{1,b}(\sigma_1(x),v)
+
\sum_{a=2}^{L_1}\bigl(f_{a,b}(\sigma_a(x),v)-f_{a,b}(0,v)\bigr)
\]
satisfies
\[
G_b(x,v)=f_{a_0,b}(x-a_0+1,v)
\qquad\text{for }x\in[a_0-1,a_0].
\]

Next, fix \(x\in\R\). Apply the same one-dimensional gluing lemma in the \(y\)-variable to the
compatible family
\(v\mapsto G_1(x,v),\ \dots,\ v\mapsto G_{L_2}(x,v)\).
The vertical edge compatibilities for the original patches imply
\(G_b(x,1)=G_{b+1}(x,0)\) for \(b=1,\dots,L_2-1\),
and again the support condition gives the outer boundary vanishing. Therefore
\[
f^\sharp(x,y)
:=
G_1(x,\sigma_1(y))
+
\sum_{b=2}^{L_2}\bigl(G_b(x,\sigma_b(y))-G_b(x,0)\bigr)
\]
satisfies
\[
f^\sharp(x,y)=G_{b_0}(x,y-b_0+1)
\qquad\text{for }y\in[b_0-1,b_0].
\]

Combining the two identities, if
\(x\in[a_0-1,a_0]\) and \(y\in[b_0-1,b_0]\),
then
\[
f^\sharp(x,y)
=
G_{b_0}(x,y-b_0+1)
=
f_{a_0,b_0}(x-a_0+1,y-b_0+1)
=
f(x,y).
\]
Thus \(f^\sharp=f\) on the support window, and the same gluing lemma shows that \(f^\sharp\) vanishes
outside \([0,L_1]\times[0,L_2]\). Hence \(f^\sharp=f\) on all of \(\R^2\).

Finally, the construction uses only fixed ramp maps, composition with the already-constructed patch
networks, subtraction of fixed edge traces, and finite summation over the fixed index sets
\(1\le a\le L_1\), \(1\le b\le L_2\). Since \(L_1,L_2\) are fixed, this enlarges the width only by a
constant factor and adds only \(O(1)\) extra depth. Therefore we have
\(f\in \Ups_{C_0W,\;L+C_1}(\ReLU;2,1)\).
\end{proof}

We now pass from the vectorized realization of \(V^nH\) on the reference square to the physical
function \(V^nH\) on the support window by applying the clamped gluing lemma.

\begin{theorem}[Special-atom theorem]
\label{thm:special-atom-theorem}
Let \(H\) be a special atom. Then there exist constants \(C_0,C_1>0\), depending only on the
support window, the mask support, and the fixed local complexity of \(H\), such that
\(V^nH \in \Ups_{C_0,C_1 n}(\ReLU;2,1)\) for \(n\ge 1\).
\end{theorem}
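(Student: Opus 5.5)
The plan is to combine Theorem~\ref{thm:2d-special-atom-unit-square} with the clamped gluing Lemma~\ref{lem:exact-clamped-gluing}, applied to \(f:=V^nH\).

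First I check that \(f\) satisfies the hypotheses of the gluing lemma. Since \(\supp H\subset[\varrho,1-\varrho]^2\subset[0,L_1]\times[0,L_2]\), iterating the support-preservation hypothesis \eqref{eq:support-preservation} gives \(\supp(V^nH)\subset[0,L_1]\times[0,L_2]\). Next, \(V\) maps continuous functions to continuous functions, because it is a finite sum of compositions with affine maps. Hence \(V^nH\) is continuous on \(\R^2\).

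The unit-square pieces of \(V^nH\) are the restrictions of one continuous function to adjacent closed squares. The edge compatibilities \(f_{a,b}(1,v)=f_{a+1,b}(0,v)\) and \(f_{a,b}(u,1)=f_{a,b+1}(u,0)\) therefore hold automatically, since both sides equal \(V^nH(a,v+b-1)\), respectively \(V^nH(u+a-1,b)\). Continuity, together with support in the window, also gives the vanishing on the outer boundary edges that the one-dimensional gluing lemma requires.

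Second, I identify the patches with network outputs. By definition of vectorization, \(f_{a,b}=e_{(a,b)}^\top\Vec(V^nH)\) on \([0,1]^2\). Theorem~\ref{thm:2d-special-atom-unit-square} shows that \(\Vec(V^nH)\) agrees on \([0,1]^2\) with a network in \(\Ups_{C_0',C_1'n}(\ReLU;2,L_1L_2)\). Selecting a coordinate is a fixed linear readout, so each patch \(f_{a,b}\), viewed as a function on \([0,1]^2\), is realized in \(\Ups_{W,L}(\ReLU;2,1)\) with \(W=C_0'\) and \(L=C_1'n\).

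One subtlety is that the patch networks are only known to be correct on \([0,1]^2\). In the gluing formula, however, they are evaluated only at the clamped arguments \((\sigma_a(x),\sigma_b(y))\in[0,1]^2\), and at the fixed edge points \((0,v)\) and \((u,0)\), which also lie in \([0,1]^2\). So only their values on the reference square matter. A second subtlety is that the row-glued functions \(G_b\) appear evaluated at \((x,\sigma_b(y))\) and at \((x,0)\). These are again compositions of the patch networks with fixed ramp maps, so they are realized by running the \(L_1L_2\) patch networks (or a single copy of the vectorized network) in parallel, with fixed ramp preprocessing layers.

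Finally, Lemma~\ref{lem:exact-clamped-gluing} yields \(V^nH\in\Ups_{C_0''C_0',\,C_1'n+C_1''}(\ReLU;2,1)\). Absorbing the additive constant into the linear depth bound for \(n\ge1\) gives constants \(C_0,C_1\) depending only on the window, the mask support, and \(H\).

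The main point requiring care is the bookkeeping in this last step. The traces \(f_{a,b}(0,v)\) and \(G_b(x,0)\) are not fixed constants but functions of the other variable. Each must be realized by feeding a modified input, with one coordinate clamped to \(0\), into a parallel copy of the vectorized network. I expect this to cost only a constant factor of \(O(L_1L_2)\) in width and \(O(1)\) in depth, and I would verify that claim explicitly.
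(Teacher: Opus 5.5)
Your proposal is correct and is essentially the paper's proof: you realize the patches via Theorem~\ref{thm:2d-special-atom-unit-square} and reassemble $V^nH$ with Lemma~\ref{lem:exact-clamped-gluing}. Your explicit checks spell out what the paper compresses into ``compatible by construction'' and ``$O(1)$ extra overhead.'' These checks are support preservation under iteration, edge compatibility and outer-edge vanishing from continuity of $V^nH$, and the observation that the clamped arguments and trace evaluations only query the patch networks on $[0,1]^2$.
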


\begin{proof}
Theorem~\ref{thm:2d-special-atom-unit-square} gives
\(\Vec(V^nH)\in \Ups_{C_0',C_1'n}(\ReLU;2,L_1L_2)\).
The unit-square pieces are compatible by construction, so
Lemma~\ref{lem:exact-clamped-gluing} reconstructs \(V^nH\) from its vectorization with only
\(O(1)\) extra overhead. This gives the stated result.
\end{proof}

\section{General compactly supported seeds}
\label{sec:general-seeds}

In this section we pass from the special-atom theorem to arbitrary compactly supported \(\CPwL\)
seeds. The argument has three finite steps: decompose the seed into finitely many translated
special atoms, move the corresponding realizations to the correct locations by translation
covariance, and sum the resulting networks. Since the number of atoms and their local
complexities depend only on the fixed geometric and combinatorial complexity of the seed, all of
this overhead is independent of the refinement depth \(n\).

\subsection{Finite special-atom decomposition}
\label{subsec:atomic-decomposition}

We begin by decomposing an arbitrary compactly supported \(\CPwL\) seed into finitely many
translated special atoms. The only issue is to choose a sufficiently fine local basis so that each
basis function fits strictly inside a translate of the reference square with margin \(\varrho\).

\begin{proposition}[Finite special-atom decomposition]
\label{prop:adaptive-special-atom-decomp}
Let $g:\R^2\to\R$ be compactly supported and $\CPwL$, and assume
\(\supp g\subset [0,L_1]\times[0,L_2]\).
Then there exist an integer $N\ge 1$, coefficients $a_1,\dots,a_N\in\R$, translation vectors
\(\delta_1,\dots,\delta_N\in\R^2\),
and special atoms $H_1,\dots,H_N$ such that
\begin{equation}
\label{eq:adaptive-atom-decomp}
g(z)=\sum_{\nu=1}^N a_\nu\,H_\nu(z-\delta_\nu),
\qquad z\in\R^2.
\end{equation}
Moreover, the number \(N\) and the local polygonal complexities of the atoms \(H_\nu\) can be
bounded in terms of the fixed support window and a chosen quantitative \(\CPwL\) description of
\(g\).
\end{proposition}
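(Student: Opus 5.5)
My plan is to use a fine triangulation together with a nodal (Courant hat) basis, and then rescale so that each hat fits inside a translate of the reference square with margin \(\varrho\). Because \(g\) is compactly supported and \(\CPwL\), I would first fix a finite triangulation \(\mathcal T\) of a slightly enlarged rectangle \([-1,L_1+1]\times[-1,L_2+1]\). This triangulation should refine the polygonal subdivision on whose cells \(g\) is affine, and its boundary should lie where \(g\equiv 0\). Every polygon can be triangulated without new vertices, so such a \(\mathcal T\) exists, and its size is bounded by the chosen quantitative \(\CPwL\) description of \(g\). Let \(\phi_p\) be the Courant hat at vertex \(p\), that is, the continuous function that is affine on each triangle with \(\phi_p(p')=\delta_{pp'}\). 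Then \(g=\sum_p g(p)\,\phi_p\) exactly, since both sides are affine on each triangle and agree at the vertices. Only interior vertices contribute, because boundary vertices have \(g(p)=0\).

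The main obstacle is that a hat \(\phi_p\) may have a large star. Its support could then be too big to fit inside a square of side \(1-2\varrho\). I would handle this by refining \(\mathcal T\) uniformly: apply \(k\) rounds of red refinement (midpoint subdivision), which is a regular refinement. The refined triangulation \(\mathcal T_k\) still refines the affine pieces of \(g\), so the interpolation identity above still holds on it. Every star in \(\mathcal T_k\) has diameter at most \(2^{1-k}\operatorname{diam}(\text{largest triangle of }\mathcal T)\). I choose \(k\) so that this is less than \(1-2\varrho\). The choice depends only on the mesh size of \(\mathcal T\) and on \(\varrho\), and the number of vertices grows by the fixed factor \(4^k\).

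For each interior vertex \(p\) I set \(\delta_p:=p-(\tfrac12,\tfrac12)\) and \(H_p(z):=\phi_p(z+\delta_p)\). Then \(\supp H_p\) is the star of \(p\) translated to be centered at \((\frac12,\frac12)\). Its diameter is below \(1-2\varrho\), so it lies inside \([\varrho,1-\varrho]^2\). Moreover, \(H_p\) is nonnegative, \(\CPwL\), and has at most as many affine pieces as the number of triangles around \(p\). Hence each \(H_p\) is a special atom in the sense of Definition~\ref{def:special-atom}. Setting \(a_p:=g(p)\) gives \eqref{eq:adaptive-atom-decomp} with \(N\) equal to the number of interior vertices. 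Both \(N\) and the complexities of the atoms are bounded by \(4^k\) times the complexity of \(\mathcal T\), as claimed.

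Two details need checking: that the red refinement really preserves the affine pieces of \(g\), and that the per-vertex triangle counts stay bounded. Both hold because red refinement replaces each triangle by four triangles similar to it, so the vertex valences do not grow beyond those of the initial \(\mathcal T\) and the boundary valence \(6\).
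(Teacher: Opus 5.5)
Your proposal is correct and follows essentially the paper's argument: triangulate compatibly with $g$, refine until every nodal hat has small support, expand $g$ in nodal hats, and translate each hat into $[\varrho,1-\varrho]^2$. The differences from the paper (which triangulates the window and uses barycentric subdivision) are mild improvements: triangulating an enlarged rectangle means the boundary-vertex hats, which carry zero coefficients, never have to be extended by zero across the window edge, and red refinement keeps the valences bounded.

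One wording fix: because you center at the vertex $p$ rather than at the bounding box of the star, you need every point of the star to lie within distance $\frac12-\varrho$ of $p$. A diameter below $1-2\varrho$ does not by itself give this. Your bound $2^{-k}\max_{T\in\mathcal T}\operatorname{diam}T<\frac12-\varrho$ does give exactly that, so the conclusion stands.
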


\begin{proof}
Choose a finite triangulation $\mathcal T$ of the support window
\([0,L_1]\times[0,L_2]\)
such that $g$ is affine on every triangle of $\mathcal T$. Since $g$ is compactly supported and
$\CPwL$, such a triangulation exists.

We now refine $\mathcal T$ further, if necessary, so that the support of every nodal basis function of
the refined triangulation has $\ell^\infty$-diameter strictly smaller than $1-2\varrho$. Since the
support window is compact and the triangulation is finite, this can be achieved by finitely many
barycentric subdivisions or any other standard finite local refinement procedure.

Let
\(\phi_1,\dots,\phi_N\)
denote the nodal hat functions of the resulting refined triangulation. Then each $\phi_\nu$ is
nonnegative, compactly supported, and $\CPwL$, and the standard nodal expansion gives
\[
g(z)=\sum_{\nu=1}^N a_\nu\,\phi_\nu(z),
\]
for suitable coefficients $a_\nu\in\R$.

Fix $\nu\in\{1,\dots,N\}$. Since the support of $\phi_\nu$ has $\ell^\infty$-diameter strictly less
than $1-2\varrho$, there exists a translation vector $\delta_\nu\in\R^2$ such that
\(\supp\phi_\nu\subset \delta_\nu+[\varrho,1-\varrho]^2\).
Define
\[
H_\nu(w):=\phi_\nu(w+\delta_\nu),
\qquad w\in\R^2.
\]
Then $H_\nu$ is nonnegative, compactly supported, and $\CPwL$, and
\(\supp H_\nu\subset[\varrho,1-\varrho]^2\).
Hence $H_\nu$ is a special atom in the sense of Definition~\ref{def:special-atom}. Moreover,
substituting \(\phi_\nu(z)=H_\nu(z-\delta_\nu)\) into the nodal expansion of $g$ gives \eqref{eq:adaptive-atom-decomp}.

Finally, the number $N$ is exactly the number of vertices of the refined triangulation, and the local
polygonal complexity of each $H_\nu$ is controlled by the valence structure of that triangulation.
Thus both quantities are bounded in terms of the chosen quantitative $\CPwL$ description of $g$
together with the fixed support window.
\end{proof}

\begin{remark}
The bound on \(N\) in Proposition~\ref{prop:adaptive-special-atom-decomp} depends in general on
geometric scale as well as combinatorics: a function may be simple in terms of affine pieces but still
require many atoms if its support is large relative to the unit-square atom size.
\end{remark}

\subsection{Translation covariance}
\label{subsec:translation-covariance}

The homogeneous operator enjoys the obvious translation covariance. This is the mechanism that
allows us to treat translated special atoms term by term.

\begin{lemma}[Translation covariance]
\label{lem:translation-covariance-2d}
Let $\delta=(\delta_1,\delta_2)\in\R^2$, and define
\(g_\delta(z):=g(z-\delta)\) for \(z\in\R^2\).
Then for every $n\ge 1$, we have
\begin{equation}
\label{eq:translation-covariance}
V^n g_\delta(z)=\bigl(V^n g\bigr)(z-2^{-n}\delta),
\qquad z\in\R^2.
\end{equation}
\end{lemma}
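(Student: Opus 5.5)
Rather than look for a transfer operator, I would prove the identity \eqref{eq:translation-covariance} by induction on \(n\), with the single-step identity as the base case. The only property of \(V\) needed is that it commutes with translations in the expected dilated way: translating the input by \(\delta\) translates the output by \(\delta/2\). No support-window or \(\CPwL\) hypothesis enters. The statement is purely algebraic and holds for every function \(g:\R^2\to\R\), since every sum involved is finite.

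\emph{Base case \(n=1\).} Fix \(z=(x,y)\) and write out the definition \eqref{eq:operator-V}:
\[
(Vg_\delta)(x,y)=\sum_{(j,k)\in\Z^2}c_{j,k}\,g\bigl(2x-j-\delta_1,\;2y-k-\delta_2\bigr).
\]
Rewrite the arguments as \(2(x-\delta_1/2)-j\) and \(2(y-\delta_2/2)-k\). The right-hand side is then exactly \((Vg)(x-\delta_1/2,\,y-\delta_2/2)=(Vg)(z-2^{-1}\delta)\). Equivalently, in operator form, \(V(g_\delta)=(Vg)_{\delta/2}\) for every \(g\) and every \(\delta\).

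\emph{Inductive step.} Assume \(V^n g_\delta=(V^n g)_{2^{-n}\delta}\), with the same shorthand \(f_\eta(z):=f(z-\eta)\). Then
\[
V^{n+1}g_\delta=V\bigl(V^n g_\delta\bigr)=V\bigl((V^n g)_{2^{-n}\delta}\bigr)=\bigl(V^{n+1}g\bigr)_{2^{-n-1}\delta}.
\]
The last equality is the base case applied to the seed \(V^n g\) with shift \(2^{-n}\delta\). This is exactly \eqref{eq:translation-covariance} at level \(n+1\).

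There is no genuine obstacle here. The one point needing care is to state the base case for an arbitrary function and an arbitrary real shift, not only for the particular seed \(g\). This generality is what allows it to be reapplied to \(V^n g\) with the shift \(2^{-n}\delta\) in the inductive step. It is also worth noting why the support-window hypothesis \eqref{eq:support-preservation} is not used. The shifted function \(g_\delta\), and the translated atoms \(H_\nu(\cdot-\delta_\nu)\) needed later, typically do not lie in the window \([0,L_1]\times[0,L_2]\). The lemma deliberately avoids that issue, so that the special-atom theorem can be applied to the unshifted \(H_\nu\) and the resulting network then composed with the fixed affine input translation \(z\mapsto z-2^{-n}\delta_\nu\), as in Remark~\ref{rem:standard-closure}(iii).
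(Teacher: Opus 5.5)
Your proof is correct and follows the paper's argument. You prove the one-step identity $V(g_\delta)=(Vg)_{\delta/2}$ by rewriting $2x-j-\delta_1=2(x-\delta_1/2)-j$ (and likewise in $y$), then iterate; you spell the iteration out as an induction that applies the one-step identity to the seed $V^ng$ with shift $2^{-n}\delta$. One point in your closing comment is inaccurate, though it does not affect the proof: the translated atoms $H_\nu(\cdot-\delta_\nu)=\phi_\nu$ are nodal hat functions of a triangulation of the window, so they do lie in $[0,L_1]\times[0,L_2]$, and what the lemma actually gets around is that the special-atom theorem is only set up for atoms supported in $[\varrho,1-\varrho]^2$.
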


\begin{proof}
Writing $z=(x,y)$ and $\delta=(\delta_1,\delta_2)$, we have
\begin{align*}
(Vg_\delta)(x,y)
&=
\sum_{(j,k)\in\Z^2} c_{j,k}\,
g_\delta(2x-j,\,2y-k)\\
&=
\sum_{(j,k)\in\Z^2} c_{j,k}\,
g(2x-j-\delta_1,\,2y-k-\delta_2)\\
&=
(Vg)\big(x-\frac{\delta_1}{2},\,y-\frac{\delta_2}{2}\big).
\end{align*}
Iterating this relation proves the general identity.
\end{proof}

\subsection{Proof of the main theorem}
\label{subsec:proof-main}

We can now deduce the full theorem.

\begin{proof}[Proof of Theorem~\ref{thm:main}]
Let $g:\R^2\to\R$ be compactly supported and $\CPwL$, with
\(\supp g\subset[0,L_1]\times[0,L_2]\).
By Proposition~\ref{prop:adaptive-special-atom-decomp}, there exist an integer $N\ge 1$,
coefficients $a_1,\dots,a_N\in\R$, translation vectors $\delta_1,\dots,\delta_N\in\R^2$, and special
atoms $H_1,\dots,H_N$ such that
\[
g(z)=\sum_{\nu=1}^N a_\nu\,H_\nu(z-\delta_\nu).
\]
Since $V$ is linear, it follows that
\[
V^n g(z)=\sum_{\nu=1}^N a_\nu\,V^n\bigl[H_\nu(\,\cdot\,-\delta_\nu)\bigr](z).
\]
Fix one term. By Lemma~\ref{lem:translation-covariance-2d}, we have
\[
V^n\bigl[H_\nu(\,\cdot\,-\delta_\nu)\bigr](z)
=
\bigl(V^nH_\nu\bigr)(z-2^{-n}\delta_\nu).
\]
On the other hand, Theorem~\ref{thm:special-atom-theorem} gives constants
\(C_{0,\nu},\,C_{1,\nu}>0\)
depending only on the fixed support window, the mask support, and the local complexity of
$H_\nu$, such that
\[
V^nH_\nu\in \Ups_{C_{0,\nu},C_{1,\nu}n}(\ReLU;2,1),
\qquad n\ge 1.
\]
By the closure property in Remark~\ref{rem:standard-closure}(iii), the translated function
\(z\mapsto \bigl(V^nH_\nu\bigr)(z-2^{-n}\delta_\nu)\)
also belongs to a class
\(\Ups_{\widetilde C_{0,\nu},\widetilde C_{1,\nu}n}(\ReLU;2,1)\)
with constants depending only on the same fixed data.

Since the number of summands $N$ is finite and independent of $n$, we may run all these networks
in parallel and then sum them. By Remark~\ref{rem:standard-closure}(ii), this enlarges the width by
only a constant factor depending on $N$ and the atom complexities, while preserving the linear
depth growth. Therefore there exist constants $C_0,C_1>0$, depending only on the mask support,
the preserved support window, and the quantitative $\CPwL$ complexity of $g$, such that
\(V^n g\in \Ups_{C_0,C_1 n}(\ReLU;2,1)\) for \(n\ge 1\).
This establishes Theorem~\ref{thm:main}.
\end{proof}

\begin{remark}
\label{rem:main-proof-end}
The proof of Theorem~\ref{thm:main} separates naturally into two layers. Section~\ref{sec:recursive-block}
contains the core multivariate construction, combining the exact torus controller, the four-branch
scalar readout, the selector matrices, the recursive block, and the clamped gluing to prove the
special-atom theorem. Section~\ref{sec:general-seeds} then passes to arbitrary compactly supported
\(\CPwL\) seeds by combining that theorem with the finite special-atom decomposition, translation
covariance, and finite summation.
\end{remark}

\section{Conclusions and outlook}
\label{sec:conclusions}

We have proved an exact ReLU realization theorem for two-dimensional tensor-product dyadic scalar
refinement iterates. Under a fixed support-window hypothesis, every compactly supported \(\CPwL\)
seed \(g:\R^2\to\R\) gives rise to iterates \(V^n g\) that admit exact ReLU realizations of fixed width
and depth \(O(n)\). This provides a first genuinely two-dimensional extension of the exact realization
theory for refinement cascades.

The main new ingredient is the exact torus controller for the residual dynamics. In the
one-dimensional loop-controller framework, the residual orbit is transported by an exact forward
state on a polygonal loop. In the present tensor-product setting, the natural controller space is the
product of two such loops. Thus the forward residual transport remains exact, while the remaining
seam ambiguity is confined to the terminal readout and selector stage. The genuinely new
multivariate work then lies in the geometric part of the construction: the digit-pair selectors, the
use of compactly supported interior atoms, and the gluing of squarewise realizations on the support
window.

The tensor-product dyadic case is, in our view, the right first multivariate setting. Its residual
dynamics is still coordinatewise, so the controller architecture is a direct product of one-dimensional
loop controllers. At the same time, the proof identifies a controller mechanism that should extend
more broadly: exact forward transport of the residual state, seam resolution by complementary
readouts, selector-driven propagation of the finite-dimensional cascade, and final patch gluing on
the support window.

Several extensions therefore appear to be natural continuations of the present result. One class
consists of tensor-product dilations in higher dimension, with diagonal expansion matrix
\[
D=\operatorname{diag}(M_1,\dots,M_d),\qquad M_i\ge 2.
\]
There the residual dynamics is again coordinatewise, so one expects the controller space to be a
product of one-dimensional loop controllers, with the new work lying mainly in the combinatorics
of the selector branches and the higher-dimensional gluing. A second class consists of vector-valued
outputs, where the same controller idea should interact with block transition matrices much as in
the one-dimensional homogeneous vector-valued theory. A third class consists of certain non-diagonal
expanding matrices for which
\[
A^p=D
\]
for some \(p\ge 1\) and some diagonal matrix \(D\). In such situations it is natural to expect that
the refinement dynamics can be organized in blocks of length \(p\), reducing the residual transport
after finitely many steps to a diagonal one. This includes quincunx-type examples among the most
natural cases to examine.

We do not pursue these broader settings here. Each would require additional notation, charting,
and bookkeeping, and in some cases a dedicated exposition would be preferable to a compressed
treatment appended to the present paper. Nevertheless, the present theorem should be viewed not
only as a complete result in the scalar two-dimensional dyadic case, but also as a foundational
multivariate instance of the loop-controller method, with higher-dimensional diagonal, vector-valued,
and partially reducible non-diagonal settings as next steps built on the same underlying
idea.

\end{document}